\documentclass[a4paper,12pt,reqno]{amsart}
\usepackage{amssymb}
\usepackage[dvips]{graphicx}
\usepackage{hyperref}
\usepackage{mathtools,amscd}

\input epsf.tex
\newdimen\xsize
\newdimen\oldbaselineskip
\newdimen\oldlineskiplimit
\def\restorelineskip{\baselineskip=\oldbaselineskip%
\lineskiplimit=\oldlineskiplimit}
\def\putm[#1][#2]#3{
\hbox{\vbox to 0pt{\parindent=0pt%
\vskip#2\xsize\hbox to0pt{\hskip#1\xsize $#3$\hss}\vss}}}%
\long\def\Line#1{\hbox to \hsize{#1}}
\def\putt[#1][#2]#3{
\vbox to 0pt{\noindent\hskip#1\xsize\lower#2\xsize%
\vtop{\restorelineskip#3}\vss}}

\makeatletter
\def\xbig[#1]#2{{\hbox{$\m@th\left#2\vbox to#1\xsize{}%
\right.\n@space$}}}
\makeatother
\def\xlar[#1]#2{%
\smash{\mathop{ \hbox to #1\xsize{\leftarrowfill}}\limits^{#2}}}
\def\xrar[#1]#2{%
\smash{\mathop{ \hbox to #1\xsize{\rightarrowfill}}\limits^{#2}}}
\def\xline[#1]{\hbox to #1\xsize{\leaders\hrule\hfill}}

\DeclareFontFamily{U}{rsf}{\skewchar\font'177}%
\DeclareFontShape{U}{rsf}{m}{n}{<-6>rsfs5<6-8>rsfs7<8->rsfs10}{}%
\DeclareFontShape{U}{rsf}{b}{n}{<-6>rsfs5<6-8>rsfs7<8->rsfs10}{}%
\DeclareMathAlphabet\RSFS{U}{rsf}{m}{n}
\SetMathAlphabet\RSFS{bold}{U}{rsf}{b}{n}
\def\sf#1{{\mathsf{#1}}}

\def\slsf{\slshape \sffamily }

\def\msmall#1{\mathchoice{\hbox{\small$\displaystyle {#1}$}}{#1}{#1}{#1}}

\def\cc{{\mathbb C}}

\def\pp{{\mathbb P}}

\def\loc{{\sf{loc}}}

\def\dim{\sf{dim}\,}

\def\id{\sf{Id}}
\def\im{\sf{Im}\,}

\def\ker{\sf{Ker}\,}
\def\lim{\mathop{\sf{lim}}}

\def\Sing{\sf{Sing}\,}

\def\supp{\sf{supp}\,}

\def\vect{{\mathrm{v}}}

\def\eps{\varepsilon}

\def\<{\langle}\let\la=\<
\def\>{\rangle}\let\ra=\>
 \let\bs=\bss 
\def\comp{\Subset}

\def\dbar{{\barr\partial}}

\def\ddef{\mathrel{{=}\raise0.3pt\hbox{:}}}
\def\deff{\mathrel{\raise0.3pt\hbox{\rm:}{=}}}
\def\inv{^{-1}}

\def\fraction#1/#2{\mathchoice{{\msmall{ #1\over#2}}}%
{{ #1\over #2 }}{{#1/#2}}{{#1/#2}}}
\def\norm#1{\left\Vert{#1}\right\Vert}

\def\lan{\langle}
\def\ran{\rangle}

\let\xrar=\xrightarrow
\def\emptyset{\varnothing}

\def\longpoints{\leaders\hbox to 0.5em{\hss.\hss}\hfill \hskip0pt}
\def\stateskip{\smallskip}
\def\state#1. {\stateskip\noindent{\bf#1. }} 
\def\statep#1. {\stateskip\noindent{\bf#1 }} 
\def\proof{\state Proof. \2}

\def\Chi{\raise 2pt\hbox{$\chi$}}

\def\ie{\hskip1pt plus1pt{\sl i.e.\/,\ \hskip1pt plus1pt}}

\def\sli{{\sl i)} } 
\def\slii{{\sl i$\!$i)} }

\def\barr#1{\mskip1mu\overline{\mskip-1mu{#1}\mskip-1mu}\mskip1mu}
\def\Chi{\raise 2pt\hbox{$\chi$}}

\let\phI=\phi\let\phi=\varphi\let\varphi=\phI
\def\bfe{{\boldsymbol e}}
\def\bftheta{{\boldsymbol\theta}}%
\def\bfsigma{{\boldsymbol{\sigma}}}
\def\bfs{{\boldsymbol s}}
\let\cal=\mathcal
\def\cala{{\cal A}}

\def\cale{{\cal E}}
\def\calf{{\cal F}}
\def\calg{{\cal G}}

\def\calo{{\cal O}}

\def\eps{\varepsilon}

\def\bs{\backslash}

\def\comp{\Subset}

\def\dbar{{\barr\partial}}
\def\1{{1\mkern-5mu{\rom l}}}

\def\ge{\geqslant}
\def\inv{^{-1}}
\let\wh=\widehat
\let\wt=\widetilde
\def\fraction#1/#2{\mathchoice{{\msmall{ #1\over#2}}}%
{{ #1\over #2 }}{{#1/#2}}{{#1/#2}}}

\def\emptyset{\varnothing}
\newcommand{\2}{\thinspace}

\def\ti#1{{\tilde{#1}}}

\def\qed{\ \ \hfill\hbox to .1pt{}\hfill\hbox to .1pt{}\hfill $\square$\par}

\def\comment#1\endcomment{}

 \belowdisplayskip=\abovedisplayskip

\def\lineeqqno(#1){\hfill\llap{\vbox to 10pt%
{\vss\begin{align} \eqqno(#1)\end{align}\vss}}\vskip1pt}

\advance\headheight 1.2pt

\def\ShowwLLabel#1{}

\def\thechpt{\Roman{chpt}}

\def\newchapt[#1]#2{\newpage%
\refstepcounter{chpt}\setcounter{subsection}{0}%
\setcounter{thm}{0}\setcounter{defi}{0}%
\setcounter{rema}{0}\setcounter{exrc}{0}%
\renewcommand{\thesubsection}{\thechpt.\arabic{subsection}}%
\section*{\begin{center}\huge \bf Chapter \thechpt\\
#2 \end{center}}\label{#1}%
\ \smallskip%
\markboth{Chapter \thechpt}{#2}%
}
\def\newsect[#1]#2{\refstepcounter{section}\setcounter{equation}{0}%
\renewcommand{\thesubsection}{\arabic{section}.\arabic{subsection}}%
\section*{\arabic{section}.
#2}\vspace{-20pt}\label{#1}\vspace{20pt}%
\markboth{Section \arabic{section}}{#2}}

\def\newlect[#1]#2{\refstepcounter{section}%
\renewcommand{\thesubsection}{\arabic{section}.\arabic{subsection}}%
\section*{Lecture \arabic{section}\\
#2}\label{#1}%
\markboth{Lecture \arabic{section}}{#2}}

\def\newprg[#1]#2{\refstepcounter{subsection}%
\subsection*{{\thesubsection.\ #2}} \label{#1}%
}

\def\newappx[#1]#2{%
\refstepcounter{appx}\setcounter{section}{0}%
\renewcommand{\thesubsection}{A\arabic{appx}.\arabic{subsection}}%
\section*{Appendix \arabic{appx}\\ #2}
\label{#1}%
\markboth{Appendix A\arabic{appx}}{#2}
}

\newtheorem{thm}{Theorem}[section]
   \def\newthm#1{\begin{thm}\label{#1}}

\newtheorem{nnthm}{Theorem}
   \def\newthm#1{\begin{nnthm}\label{#1}}

\newtheorem{lem}{Lemma}[section]
   \def\newlemma#1{\begin{lem} \label{#1}}

\newtheorem{prop}{Proposition}[section]
   \def\newprop#1{\begin{prop}\label{#1}}

\newtheorem{nnprop}{Proposition}
   \def\newprop#1{\begin{nnprop}\label{#1}}

\newtheorem{corol}{Corollary}[section]
   \def\newcorol#1{\begin{corol} \label{#1}}

\newtheorem{nncorol}{Corollary}
   \def\newcorol#1{\begin{nncorol} \label{#1}}

\newtheorem{defi}{Definition}[section]
   \def\newdefi#1{\begin{defi} \label{#1}\rm }

\newtheorem{nndefi}{Definition}
   \def\newdefi#1{\begin{nndefi} \label{#1}\rm }

\newtheorem{exmp}{Example}[section]
   \def\newexmp#1{\begin{exmp} \label{#1}\rm }

\newtheorem{exrc}{Exercise}
   \def\newexrc#1{\begin{exrc} \label{#1}\rm }

\newtheorem{rema}{Remark}[section]
   \def\newrema#1{\begin{rema} \label{#1}\rm }

   \def\newrema#1{\begin{rema} \label{#1}\rm }

\def\eqqno(#1){\label{(#1)}}
\def\eqqref(#1){(\ref{(#1)})}

\usepackage[dvipsnames]{xcolor} 
\usepackage{colortbl}

\title{A Thullen-type Extension Theorem for Nakano\\[3pt]
Semi-Positive Holomorphic Vector Bundles}

\author{V. Del\'ecluse, S. Ivashkovych*}

\address{Universit\'e de Lille, UFR de Math\'ematiques, 59655 Villeneuve d'Ascq, France.}
\email{valentin.delecluse@univ-lille.fr}

\address{Universit\'e de Lille, UFR de Math\'ematiques, 59655 Villeneuve d'Ascq, France.}
\email{serge.ivashkovych@univ-lille.fr}

\subjclass[2020]{Primary - 32D15,; Secondary - 32J25, 32L05,
32L10, 14F17}
\keywords{Holomorphic bundle, Nakano positivity, Thullen-type extension,
$L^2$-estimates.}
\thanks{* Partially supported by the  R-CDP-24-004-C2EMPI project.}
\date{\today}

\begin{document}
\begin{abstract}
We prove a Thullen-type extension theorem for hermitian bundles
with Nakano semi-positive curvature. As an intermediate result we
establish the $L^2$-estimates for the $\dbar$-equation in hermitian
vector bundles over a non-complete K\"ahler manifolds.
\end{abstract}

\maketitle

\newsect[INT]{Introduction}

\newprg[INT.result]{Formulation of the Main Result}
Let $X$ be a complex manifold and $Y$ a complex submanifold of $X$ 
of pure codimension $1$. Let $G$ be an open set of $X$ which contains 
$X\setminus Y$ and intersects every branch of $Y$. A domain of the 
form $G$ described above is called a Thullen-type domain. Another 
way to see a Thullen-type domain is $G = (X\setminus Y)\cup U$, where
$U$ is an open set intersecting every branch of $Y$. We say that an 
analytic object possesses the Thullen-type extension property if it 
extends from $G$ to $X$ for all such $X,Y,G$ as above. For example, 
holomorphic and meromorphic functions possess the Thullen-type extension 
property, see \cite{Si2}.

\smallskip In this paper we shall prove the Thullen-type extension 
theorem for Nakano semi-positive holomorphic vector bundles. Recall 
that a  hermitian (\ie holomorphic with hermitian metric, see \cite{GH}) 
vector bundle $E$ over a complex manifold $X$ is called {\slsf Nakano 
semi-positive} if its curvature form is positive semi-definite.


\begin{nnthm}
\label{thul-posit} 
Let $E$ be a Nakano semi-positive hermitian vector bundle on the 
Thullen-type domain $G$ in a complex manifold $X$. Then there 
exists a reflexive coherent  analytic sheaf $\wt\cale$ on $X$ and
an isomorphism $\phi: \cale \to \wt\cale|_G$, where $\cale$ is the sheaf 
of holomorphic sections of $E$. Moreover, for every holomorphic section
$\wt s$ of $\wt \cale$ its restriction $\wt s|_G$ is a $\phi$-image of 
a holomorphic section $s$ of $E$, which is $L^2$-bounded on compacts 
in $X$.
\end{nnthm}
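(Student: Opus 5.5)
The plan is to reduce the theorem to a local statement near points of $Y\cap\partial U$, and to solve it there with $L^2$-methods for $\dbar$ on the non-complete K\"ahler manifold $G$. The abstract says this is the intermediate result we may rely on.

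First, the local reduction. Since $Y$ is a submanifold of pure codimension one and $U$ meets every branch, the set of points of $Y$ to which $E$ extends is open in $Y$ by construction. It suffices to show it is also closed; connectedness of each branch then gives extension to all of $X$. So fix a boundary point $y_0\in Y$ of this set. Choose coordinates $(z',z_n)$ on a polydisc $\Delta^{n-1}\times\Delta$ around $y_0$ with $Y=\{z_n=0\}$. In these coordinates $G$ contains a Hartogs figure
$$H=\bigl(\Delta^{n-1}\times(\Delta\setminus\{0\})\bigr)\cup\bigl(V\times\Delta\bigr),$$
where $V\subset\Delta^{n-1}$ is a nonempty open set. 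The goal is to produce enough sections of $E$ on $H$ that are $L^2$ on compacts of the full polydisc $P=\Delta^n$. We then extend them, as vector-valued holomorphic objects, by the classical Thullen/Hartogs theorem for holomorphic functions applied to coefficients with respect to a frame on $V\times\Delta$.

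The core step is the construction of sections. $H$ is not pseudoconvex, but $H\setminus\{z_n=0\}$ carries a complete K\"ahler metric, since it is a Stein-like domain minus a hypersurface. Alternatively we work with the plurisubharmonic weight $\psi=-\log|z_n|^2+|z|^2$. For a point $p\in V\times\Delta$ and a local holomorphic section $\sigma$ near $p$, we take a cutoff $\chi$ and solve
$$\dbar u=\dbar\chi\,\sigma$$
with the Nakano semi-positivity of $E$ supplying the curvature condition. We add a weight $e^{-2n\log|z-p|-\varphi}$, where $\varphi$ is strictly plurisubharmonic. The $L^2$-estimate (H\"ormander--Demailly type, valid in the non-complete setting by the intermediate result) gives $u$ vanishing at $p$. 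Then $s=\chi\sigma-u$ is a holomorphic section of $E$ on $H$ with $s(p)=\sigma(p)$ and with finite $L^2$-norm, since the weights are bounded above on $P$. So the sections that are $L^2$ on compacts of $P$ generate $E$ at generic points.

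Next we define $\wt\cale$. We use holomorphic frames over $V\times\Delta$ to express $L^2$ sections as vector functions and let $\wt\cale(P)$ be the module generated by their extensions. The precise definition is $\wt\cale_x$ as the $\calo_x$-module of germs obtained as limits of extended $L^2$ sections. Coherence follows from the Noetherian property via Oka/Grauert. Reflexivity is obtained by replacing the sheaf with its double dual $\wt\cale^{**}$. This does not change anything on $G$, where $\cale$ is locally free, and reflexive sheaves are determined off codimension two, which lets the local extensions glue along $Y$.

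The final assertion, that every section of $\wt\cale$ restricts to an $L^2$-bounded section, is proved by expressing it locally through the generators and using the $L^2$ estimate again. The main obstacle is the $\dbar$-step on the non-complete, non-pseudoconvex $H$. One must ensure that the weights control the $L^2$-norm up to $Y$ uniformly, and that the Nakano curvature term is not spoiled by the singular weight $-\log|z_n|^2$. I would first try this on $P\setminus Y$ with Demailly's complete-metric trick plus the removable-singularity theorem for $L^2$ holomorphic sections across the hypersurface $Y$.
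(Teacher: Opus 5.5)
\textbf{There is a genuine gap: the sections are produced correctly, but the step that extends them, and hence $E$, to $P$ does not work as stated.}

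\textbf{What is right.} Your analytic step is essentially correct and close to the paper's Theorem~\ref{L2-section}. Solving $\dbar$ on $P\setminus Y$ with the estimate for non-complete K\"ahler metrics (Theorem~\ref{solve-dbar}), then crossing $Y\cap G$ by the $L^2$ Riemann extension theorem, does give sections of $E$ over $H$. These are $L^2$ on compacts of $P$ and take any prescribed value at any $p\in H$. Two small corrections: your weight $2n\log|z-p|$ must be regularized, e.g.\ by $n\log(|z-p|^2+\delta^2)$, because Theorem~\ref{solve-dbar} is stated for smooth weights. The paper avoids this by dividing by a holomorphic $f$ with $f(p)=0$ and $f\neq 0$ on $Y\setminus G$. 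Also drop the alternative weight $-\log|z_n|^2$: it destroys the $L^2$ control at $Y$.

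\textbf{The main gap: extending the sections.} Thullen's theorem cannot be applied to ``coefficients with respect to a frame on $V\times\Delta$''. Those coefficients are holomorphic functions on $V\times\Delta$ only, not on the Thullen figure $H$, so there is nothing to extend. A frame on all of $H$ would mean $E$ is trivial on $H\supset\Delta^{n-1}\times(\Delta\setminus\{0\})$. That is not known, and if it were, Nakano positivity would play no role. For the same reason, ``the module generated by their extensions'' and the Noetherian/Oka coherence argument have no ambient coherent sheaf on $P$ in which to take place.

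\textbf{The missing idea.} The core of Section~\ref{EXT} is to encode $E$ through objects with values in a \emph{fixed} space.
\begin{enumerate}
\item The $L^2$ sections define a morphism $\Phi:H\times\cc^{Nr}\to E$. One makes it surjective at \emph{every} point of $H$ (Lemmas~\ref{surj-map1-bis}--\ref{surj-map3-bis}); generation at generic points is not enough to recover $\cale$ on $G$.
\item Then $E\cong(H\times\cc^{Nr})/\ker\Phi$ is determined by the holomorphic map $z\mapsto\ker\Phi_z$ into a Grassmannian. Equivalently, it is determined by ratios of $r\times r$ minors, which are honest meromorphic functions on $H$.
\item These extend by the Thullen theorem for meromorphic functions.
\item The extended sheaf is then built by resolving the indeterminacies of the extended map with a proper modification $\hat\pi$, taking the quotient bundle upstairs, pushing it down by Grauert's direct image theorem, and passing to the double dual.
\end{enumerate}

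\textbf{The reduction to a local problem also fails as stated.} At a boundary point $y_0$ of the set where $E$ has already been extended, $G$ itself need not contain a Hartogs figure around $y_0$. The part $V\times\Delta$ will in general lie only in the region where you have a reflexive extension. That region carries no hermitian metric, hence no Nakano positivity and no $L^2$ Riemann extension across $Y$, so the local step cannot be iterated. The paper instead joins a point of $Y\cap G$ to $y$ by a path in $Y$. It then uses Royden's lemma to straighten a long polydisc around that path, so every local Thullen figure lies in $G$.

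\textbf{The gluing also fails as stated.} ``Reflexive sheaves are determined off codimension two'' does not glue local extensions across $Y\setminus G$, which has codimension one. What is needed is that sections of $Hom_{\calo}(\calf,\calg)$ extend from a Thullen subdomain. This is Lemma~\ref{ext-refl-morph-bis}: Thullen extension on the locally free locus, then normality across codimension two.
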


The special case where $Y$ has codimension two and $E$ is a line bundle
was proved by Shiffman in \cite{Sh1, Sh2}. The general case was announced 
by Siu in \cite{Si1}, where a proof for $\dim X=2$ was sketched. In this
case a bundle extends as a bundle: \ie $\wt\cale$ in Theorem 
\ref{thul-posit} is the sheaf of holomorphic sections of a holomorphic
bundle. 

\newprg[INT.proof]{Main ingredients of the proof.}

Let us outline the three principal steps in the proof of Theorem \ref{thul-posit}.

\smallskip\noindent{\bf 1.} First is the following statement which is, probably, of independent interest.

\begin{nnthm}
\label{solve-dbar} Let $X$ be a Stein manifold of dimension $n$,
 $\omega$ a K\"ahler form on $X$, $(E,h)$ a Nakano semi-positive Hermitian
 vector bundle on $X$, and $\phi$ a strictly PSH function such that 
 $i\partial\dbar \phi\geq c\cdot\omega$  with some constant $c>0$. 
 Set $h_\phi:=e^{-\phi}h$. Then for every $1\leq q\leq n$ and
 every $u\in L^2_{n,q}(X,E,h_\phi)$ such that $\dbar u=0$ 
 there exists $v\in L^2_{n,q-1}(X,E,h_\phi)$ such that $\dbar_Ev=u$ and
 $\dbar^*_Ev=0$ in the weak sense and
\[
c \cdot\norm{v}^2_{L^2_{n,q-1}(X,E,h_\phi)} \leq \norm{u}^2_{L^2_{n,q}(X,E,h_\phi)}.
\]
\end{nnthm}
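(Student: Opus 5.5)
The plan is the standard Hörmander--Demailly $L^2$ method, with the one subtlety that $\omega$ is an arbitrary (possibly non-complete) Kähler form on the Stein manifold $X$. We would first work on relatively compact Stein subdomains $X_k\comp X$, exhausting $X$, on which a complete Kähler metric exists. Then we remove the dependence on completeness by the usual limiting procedure for $(n,q)$-forms, and finally pass to the limit $k\to\infty$ by weak compactness. The normalization $\dbar^*_E v=0$ will come for free by taking the minimal-norm solution, \ie the one orthogonal to $\ker\dbar_E$.

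\textbf{Step 1 (curvature inequality).} For the metric $h_\phi=e^{-\phi}h$ the Chern curvature is
\[
i\Theta_{h_\phi}(E)=i\Theta_h(E)+i\partial\dbar\phi\otimes\id_E \;\ge_{\rm Nak}\; c\,\omega\otimes\id_E,
\]
since $i\Theta_h(E)\ge_{\rm Nak}0$ and $i\partial\dbar\phi\ge c\,\omega$. Hence for $(n,q)$-forms the curvature operator $A=[i\Theta_{h_\phi}(E),\Lambda_\omega]$ satisfies $\langle A u,u\rangle\ge qc|u|^2\ge c|u|^2$ pointwise, for $q\ge1$. This holds for $\omega$ and, by the standard monotonicity lemma of Demailly, also $\langle A_{\omega_\eps}^{-1}u,u\rangle\,dV_{\omega_\eps}\le \langle A_\omega^{-1}u,u\rangle\,dV_\omega\le c^{-1}|u|^2_\omega dV_\omega$ for any Kähler $\omega_\eps\ge\omega$. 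Here we use that for $(n,q)$-forms the quantity $|u|^2dV$ is nonincreasing in the metric.

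\textbf{Step 2 (complete case on $X_k$).} Take $X_k=\{\psi<k\}$ with $\psi$ a smooth strictly psh exhaustion of $X$. On $X_k$ set $\omega_\eps=\omega+\eps\,i\partial\dbar(-\log(k-\psi))^{\,}$; this is complete Kähler and $\ge\omega$. By the Bochner--Kodaira--Nakano identity and density of compactly supported forms in the graph norm (available because $\omega_\eps$ is complete), we get
\[
\|\dbar_E f\|^2+\|\dbar^*_E f\|^2\ge \langle\!\langle A_{\omega_\eps}f,f\rangle\!\rangle .
\]
The Riesz representation argument then yields $v_{\eps}$ on $X_k$ with $\dbar_E v_\eps=u$ and $\|v_\eps\|^2_{\omega_\eps}\le \int\langle A^{-1}_{\omega_\eps}u,u\rangle dV_{\omega_\eps}\le c^{-1}\|u\|^2$, using Step 1. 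We choose $v_\eps$ in $(\ker\dbar_E)^\perp$, so that $\dbar^*_{E,\omega_\eps}v_\eps=0$. For $(n,q-1)$-forms we have $\|v_\eps\|_{\omega}$ on compacts bounded by $\|v_\eps\|_{\omega_\eps}$ up to control, and in fact $|v|^2_{\omega}dV_\omega\ge|v|^2_{\omega_\eps}dV_{\omega_\eps}$ goes the wrong way. Therefore we keep the weak limit in the $\omega_\eps$-norms on compact subsets, where $\omega_\eps\to\omega$ uniformly. Letting $\eps\to0$ and extracting weak limits on an exhausting sequence of compacts gives $v^{(k)}$ on $X_k$ with $\dbar_E v^{(k)}=u$ and $c\|v^{(k)}\|^2_{L^2(X_k,\omega)}\le\|u\|^2$.

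\textbf{Step 3 (exhaustion and normalization).} The $v^{(k)}$ are uniformly bounded, so a subsequence converges weakly in $L^2_{loc}$ to some $v$. Lower semicontinuity gives $c\|v\|^2\le\|u\|^2$, and $\dbar_E v=u$ holds in the sense of distributions. To obtain $\dbar^*_Ev=0$, we replace $v$ by its projection onto $(\ker\dbar_E)^\perp$ in $L^2_{n,q-1}(X,E,h_\phi)$. This preserves $\dbar_Ev=u$, does not increase the norm, and gives $\dbar^*_Ev=0$ weakly, since orthogonality to all compactly supported $\dbar_E$-exact forms $\dbar_E g$ is exactly the weak equation $\dbar^*_E v=0$.

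\textbf{Main obstacle.} The delicate point is Step 2, namely making the passage $\eps\to0$ rigorous when $\omega$ itself is not complete. One must control the norms of $(n,q-1)$-forms with respect to $\omega_\eps$ versus $\omega$. On compacts of $X_k$ this is handled by the uniform convergence $\omega_\eps\to\omega$ there, combined with a diagonal extraction. The a priori bound coming from the $(n,q)$-monotonicity in Step 1 is what keeps the estimate independent of $\eps$ and $k$.
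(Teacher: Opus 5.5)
Your argument is correct, but it follows Demailly's textbook route rather than the paper's.

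\textbf{The paper's route.} The paper does not restrict to sublevel sets, and it does not use the monotonicity lemma for $\langle A_\gamma^{-1}u,u\rangle\,dV_\gamma$. Since $X$ is Stein, it takes a \emph{global} complete K\"ahler form $\hat\omega$ together with a potential $\psi\ge 0$ satisfying $i\partial\dbar\psi\ge c\,\hat\omega$. It then perturbs the metric and the weight simultaneously: $\omega_\eps=\omega+\eps\hat\omega$ and $h_{\phi+\eps\psi}$. This gives $i\partial\dbar(\phi+\eps\psi)\ge c\,\omega_\eps$, so the plain H\"ormander estimate with the same constant $c$ applies on the complete manifold $(X,\omega_\eps)$. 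The only comparisons needed are the elementary fact that $|u|^2\,dV$ for $(n,q)$-forms decreases as the metric increases, and $e^{-\eps\psi}\le 1$. There is a single limit $\eps\to0$, and $\dbar^*_Ev=0$ is obtained by passing $\dbar^*_{E,\eps}v_\eps=0$ to the limit.

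\textbf{Your route.} You keep the weight fixed. In exchange you need the finer pointwise monotonicity of $\langle A^{-1}u,u\rangle\,dV$, and a second limit $k\to\infty$. That second limit is unnecessary here, since Stein manifolds carry complete K\"ahler metrics globally. What your version buys is generality: it works verbatim on any weakly pseudoconvex K\"ahler manifold. Your final projection onto $(\ker\dbar_E)^\perp$ also yields the minimal-norm solution, which is slightly more than the weak equation $\dbar^*_Ev=0$.

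You are also right to flag that for $(n,q-1)$-forms the norm comparison goes the wrong way. It must be handled on compacts where $\omega_\eps\to\omega$ uniformly; the paper's limit step uses this silently.
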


When $\omega$ is supposed to be {\slsf complete} this statement is essentially
due to H\"ormander.

\smallskip\noindent{\bf 2.} The second step exploits the idea of Siu, 
outlined in \cite{Si1}, how to produce sufficiently many holomorphic 
sections of a bundle with semi-positive curvature, see Theorem \ref{L2-section}
in subsection \ref{POS.sect}. Case when $\dim X=2$ follows already from
this step.

\smallskip\noindent{\bf 3.} Finally in Section \ref{EXT} we develop methods
permitting us to increase the dimension of the ambient manifold $X$. 

\smallskip\noindent{\slsf Acknowledgments.} Authors would like to express their
gratitude to V. Shevchishin for the numerous usefull discussions on the subject
of this paper, especially for explaining us his methods from \cite{She1,She2}. 
The second author would like to thank Y.-T. Siu who pointed to him the fact 
that the case of dimension $\ge 3$ of the ambient manifold $X$ doesn't
follow readily from \cite{Si1}.

\newsect[POS]{Holomorphic bundles with semi-positive curvature}

\newprg[POS.hodge]{Hodge calculus in Hermitian vector bundles.}
We need a generalization of Hodge identities for the case of $(p,q)$-forms
with values in holomorphic Hermitian bundles.

\smallskip 
Let $X$ be a complex Hermitian manifold (i.e. complex manifold with hermitian metric)
of dimension $n$.  Since every
Hermitian metric on $X$ is determined by the associated $(1,1)$-form $\omega$, we
often indicate such a form $\omega$ instead of the metric itself. In local
coordinates, if a metric has representation $h_\omega=\sum h_{jk}dz_j\otimes d\bar z_k$ then
$\omega=Im(h_\omega)=\frac{i}{2}\sum h_{jk}dz_j\land d\bar z_k$. We use the metric $h_\omega$ to
define various differential-geometric entities on $X$. For example, the volume
form on $X$ is $dV=\frac{\omega^n}{n!}$ and $*$ is the Hodge operator. As usual,
for any operator $S$ between Hermitian vector spaces or bundles we denote by
$S^*$ its adjoint operator and by $\barr S$ its complex conjugate if the
latter is well-defined. Now define the operators $L=L_\omega$, $\Lambda=\Lambda_\omega$, $\tau=\tau_\omega$ on
the spaces of $(p,q)$-forms setting $L_\omega(\alpha):=\alpha\land\omega$, $\Lambda_\omega(\alpha):=*\circ L_\omega\circ*\alpha$ so that
$\Lambda_\omega=L_\omega^*$, $\tau_\omega:=[\Lambda_\omega,\partial\omega]$ where $[\cdot,\cdot]$ denotes the commutator of
operators.

Now let $(E,h_E)$ be a holomorphic Hermitian vector bundle on $X$. We denote
by $\nabla_E$ the operator of the Chern connection in $E$ and by
$D_E:\cala^k(X,E)\to\cala^{k+1}(X,E)$ its extension to $E$-valued differential
forms. Set $D^{1,0}_E=:\partial_E$, $D^{0,1}_E=:\dbar_E$, and let $\partial^*_E$,
$\dbar^*_E$ be the adjoint operators. Notice that the curvature of the Chern
connection in $(E,h_E)$ can be computed by $F_E:=D_E^2=\partial_E\dbar_E+\dbar_E\partial_E$.
Besides, we introduce the Laplace operators $\Delta'_{E}:=\partial_E\partial_E^*+\partial_E^*\partial_E$,
$\Delta'_{E,\tau}:=(\partial_E+\tau)(\partial_E^*+\tau^*)+(\partial_E^*+\tau^*)(\partial_E+\tau)$, and
$\Delta''_{E}:=\dbar_E\dbar_E^*+\dbar_E^*\dbar_E$. Further, denote by $\partial\omega$ the
operator $\alpha\in\cala^k(X,E)\mapsto\partial\omega\land\alpha\in \cala^{k+3}(X,E)$.  and by
$(\partial\omega)^*:\cala^{k+3}(X,E)\to\cala^k(X,E)$ its adjoint operator.  The following
relations were proved in \cite{Dm2}, see also \cite{Dm3}, Ch.\,VII.

\begin{lem}
\label{Hodge-in-VB}
The following identities hold:
\begin{eqnarray*}
&[\dbar_E^*,L]=i(\partial_E+\tau),\qquad  &[\partial_E^*,L]=-i(\dbar_E+\bar\tau)
\\
&[\dbar_E,\Lambda]=i(\partial^*_E+\tau^*),\qquad
&[\partial_E^*,\Lambda]=-i(\dbar_E^*+\bar\tau^*),
\end{eqnarray*}
\begin{equation}
\label{Delta-E}
\Delta''_E=\Delta'_{E,\tau} + [iF_E,\Lambda]+T_\omega
\end{equation}
where
\begin{equation}\label{T-ohm}
T_\omega=\Big[\Lambda,[\Lambda, \tfrac{i}{2}\partial\dbar\omega]\Big]
-\partial\omega\circ(\partial\omega)^*
-(\partial\omega)^*\circ\partial\omega
\end{equation}
In particular,  if $\omega$ is K\"ahler one has
\begin{equation}
\label{Delta-E-Kh}
\Delta''_E=\Delta'_{E,\tau} + [iF_E,\Lambda]
\end{equation}
\end{lem}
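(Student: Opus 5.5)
We prove the first-order commutation relations first and then deduce the Bochner--Kodaira--Nakano identity \eqref{Delta-E} from them. Everything is pointwise and algebraic apart from the first-order operators, so we work at a fixed point $x_0\in X$.

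\textbf{Step 1: the operator identities.} We choose local coordinates centered at $x_0$ and a holomorphic frame of $E$ that is normal at $x_0$, so that $h_E=\id+O(|z|^2)$. Then $\partial_E=\partial+\theta$ with $\theta(x_0)=0$, and the zero-order terms $\theta$, $\theta^*$ only contribute terms vanishing at $x_0$. The four commutators therefore reduce to their scalar counterparts for the trivial bundle.

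For $\omega$ Kähler we may further take $\omega$ osculating to order two to the flat metric. The identities then follow from the flat Kähler identities $[\dbar^*,L]=i\partial$, etc., computed in $\cc^n$ by direct coordinate calculation with the contraction operators.

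For $\omega$ merely Hermitian we instead take coordinates in which $\omega=\tfrac{i}{2}\sum dz_j\wedge d\bar z_j+O(|z|)$. We then compute the first-order error. $[\dbar^*,L]$ differs from the flat expression by a term linear in $d\omega(x_0)$. Since $[\partial,L]=\partial\omega\wedge\cdot$, one checks by a direct expansion that this error is exactly $i[\Lambda,\partial\omega]=i\tau$. The other three identities follow from the first one: take adjoints to pass between $[\cdot,L]$ and $[\cdot,\Lambda]$, and take complex conjugates to pass between $\partial$ and $\dbar$.

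\textbf{Step 2: the Laplacian identity.} We write
\[
\Delta''_E=\dbar_E\dbar_E^*+\dbar_E^*\dbar_E
=-i\big(\dbar_E[\partial_E^*+\tau^*,\Lambda]+[\partial_E^*+\tau^*,\Lambda]\dbar_E\big),
\]
using $\dbar_E^*=-i[\partial_E^*+\tau^*,\Lambda]$, which is the adjoint of the first identity. Next we apply the graded Jacobi identity to $[\dbar_E,[\partial_E^*+\tau^*,\Lambda]]$. This produces $[\partial_E^*+\tau^*,[\Lambda,\dbar_E]]=[\partial_E^*+\tau^*,-i(\partial_E^*+\tau^*)]$-type terms together with $[\Lambda,[\dbar_E,\partial_E^*+\tau^*]]$.

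In the Kähler case $\tau=0$, and $[\dbar_E,\partial_E^*]$ is computed via $\partial_E^*=i[\Lambda,\dbar_E]$ and Jacobi again. Together with $\dbar_E^2=0$ this reduces to $\Delta'_E$ plus $[iF_E,\Lambda]$, where the curvature term arises from $[\partial_E,\dbar_E]=D_E^2=F_E$. This gives \eqref{Delta-E-Kh}.

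In the general case the same bookkeeping yields $\Delta'_{E,\tau}+[iF_E,\Lambda]$ plus the leftover commutators involving $\tau$, $\bar\tau$ and $\dbar\tau$. These have to be simplified into $T_\omega$, using $\tau=[\Lambda,\partial\omega]$, $\dbar\partial\omega$, and the relations $[\partial\omega,L]=0$ and $[L,\Lambda]=(p+q-n)$. I expect this simplification to be the main obstacle: producing precisely \eqref{T-ohm} requires a careful sign-tracked Jacobi computation. As the statement indicates, this computation is Demailly's \cite{Dm2}, \cite{Dm3}, Ch.\,VII, and I would follow it line by line rather than re-derive it.
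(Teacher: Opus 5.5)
There is a genuine gap in Step~2: the formula $\dbar_E^*=-i[\partial_E^*+\tau^*,\Lambda]$, on which the whole computation rests, is false. Its right-hand side has bidegree $(-2,-1)$, whereas $\dbar_E^*$ has bidegree $(0,-1)$. Taking adjoints in $[\partial_E,L]=\partial\omega\wedge\cdot$ gives $[\partial_E^*,\Lambda]=-(\partial\omega)^*$, so for K\"ahler $\omega$ your formula says $\dbar_E^*=0$, hence $\Delta''_E=0$. It is also not the adjoint of the first identity; that adjoint is the third identity $[\dbar_E,\Lambda]=i(\partial_E^*+\tau^*)$. As a result, the Jacobi manipulation cannot give what you claim. $\partial_E$ never occurs in your expression for $\Delta''_E$, so neither $\Delta'_E$ nor the curvature term $[\partial_E,\dbar_E]=F_E$ can come out of it. In the K\"ahler case your two terms are $[\partial_E^*,-i\partial_E^*]=-2i(\partial_E^*)^2=0$ and $[\Lambda,[\dbar_E,\partial_E^*]]=0$. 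The confusion traces back to the fourth identity as printed, $[\partial_E^*,\Lambda]=-i(\dbar_E^*+\bar\tau^*)$, which is a misprint for $[\partial_E,\Lambda]=-i(\dbar_E^*+\bar\tau^*)$. The corrected version is what your adjoint/conjugation argument in Step~1 actually yields, and you should have noticed the bidegree mismatch.

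The input you need comes from the adjoint of the \emph{second} identity: $[\partial_E,\Lambda]=-i(\dbar_E^*+\bar\tau^*)$, i.e. $\dbar_E^*=-i[\Lambda,\partial_E]-\bar\tau^*$. Then $\Delta''_E=-i[\dbar_E,[\Lambda,\partial_E]]-[\dbar_E,\bar\tau^*]$. The Jacobi identity, the third identity and $[\dbar_E,\partial_E]=F_E$ give
\[
[\dbar_E,[\Lambda,\partial_E]]=[[\dbar_E,\Lambda],\partial_E]+[\Lambda,[\dbar_E,\partial_E]]=i[\partial_E^*+\tau^*,\partial_E]+[\Lambda,F_E],
\]
hence $\Delta''_E=[\partial_E,\partial_E^*+\tau^*]+[iF_E,\Lambda]-[\dbar_E,\bar\tau^*]$. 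For K\"ahler $\omega$ ($\tau=0$) this is \eqref{Delta-E-Kh}, which is the only case the paper uses later. In general, write $[\partial_E,\partial_E^*+\tau^*]=\Delta'_{E,\tau}-[\tau,\partial_E^*+\tau^*]$; what then remains is $T_\omega=-[\tau,\partial_E^*+\tau^*]-[\dbar_E,\bar\tau^*]$. Deferring that to Demailly is fine, since the paper gives no proof of this lemma and only cites Demailly.

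A smaller point in Step~1: for Hermitian $\omega$, the first-order error is a priori linear in all first derivatives of the coefficients of $\omega$ at $x_0$, not just in $d\omega(x_0)$. That only $\partial\omega(x_0)$ survives is part of what the expansion has to show. One usually first normalizes the coordinates to remove the part of the $1$-jet that $d\omega$ does not see.
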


\newprg[POS.horm]{H\"ormander's $L^2$-method.}

We refer to \cite{Ho} and \cite{Dm3}, Ch.\,VIII, to the introduction to
H\"ormander's $L^2$-method in complex analysis.
For $(X,\omega)$ and $(E,h_E)$ as above denote by $L^2_{(p,q)}(X,E)$ the space 
of $L^2$-integrable $E$-valued $(p,q)$-form on $X$ with the usual norm
\[
\norm{u}^2_{L^2_{(p,q)}(X,E)}=\norm{u}^2_{L^2_{(p,q)}}:=
\int_X \lan u, u\ran dV.
\]
Denote by $T_\dbar:L^2_{(p,q)}(X,E)\to L^2_{(p,q+1)}(X,E)$ the closure with respect to the graph norm of the operator $\dbar_E:\cala^{p,q}(X,E)\to
\cala^{p,q+1}(X,E)$ defined on forms with compact support.

\begin{thm}[\cite{Ho}]
\label{L2-in-VB}
Assume that the metric $h_\omega$ is complete and that for a given bi-index
$(p,q)$ there exists a constant $c>0$ such that
\begin{equation}\label{dbar-estim}
\norm{\dbar u}^2_{L^2(X)}+\norm{\dbar{}^*u}^2_{L^2(X)}
\geq c\cdot \norm{u}^2_{L^2(X)}
\end{equation}
for every $u\in L^2_{(p,q)}(X,E)$ such that $\dbar u\in L^2_{(p,q+1)}(X,E)$ and
$\dbar^*u\in L^2_{(p,q-1)}(X,E)$. Then the sequence of closed operators
\[
L^2_{(p,q-1)}(X,E)\xrar{\;T_{\dbar}} L^2_{(p,q)}(X,E)\xrar{\;T_{\dbar}} L^2_{(p,q+1)}(X,E)
\]
is a complex which is exact in the middle term.
In particular, for every form $u\in L^2_{(p,q)}(X,E)$ such that $\dbar_Eu=0$
in the weak sense there exists $v\in L^2_{(p,q-1)}(X,E)$ such that $\dbar_Ev=u$ 
and in the weak sense $c\cdot \norm{v}^2_{L^2(X)}\leq\norm{u}^2_{L^2(X)}$.

Furthermore, $v$ is orthogonal to the kernel of the operator
$T_{\dbar}:L^2_{(p,q-1)}(X,E)\to L^2_{(p,q)}(X,E)$ and, in particular, it
satisfies the relation $\dbar_E^*v=0$ in the weak sense.
\end{thm}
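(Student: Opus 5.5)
The plan is the standard functional-analytic argument: deduce the estimate \eqref{dbar-estim} for test forms, then pass to the whole domains using completeness of $h_\omega$. Set $H_1=L^2_{(p,q-1)}$, $H_2=L^2_{(p,q)}$, $H_3=L^2_{(p,q+1)}$, and write $T=T_{\dbar}:H_1\to H_2$ and $S=T_{\dbar}:H_2\to H_3$. Both are closed and densely defined. That $ST=0$ follows because $\dbar_E^2=0$ on smooth compactly supported forms and this relation passes to graph closures. So we have a complex, and we need $\ker S=\im T$.

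The key step is to use completeness of $h_\omega$. Choose an exhaustion by cutoff functions $\chi_\nu\in C^\infty_c(X)$ with $0\le\chi_\nu\le1$, $\chi_\nu\uparrow1$ and $|d\chi_\nu|\le 2^{-\nu}$. The standard Andreotti–Vesentini/Hörmander argument, using these cutoffs together with Friedrichs regularization, gives two things. First, the Hilbert adjoint $T^*$ coincides with the weak (formal) adjoint $\dbar_E^*$, i.e.\ the maximal and minimal extensions agree. Second, smooth compactly supported forms are dense in $\mathrm{Dom}\,S\cap\mathrm{Dom}\,T^*$ for the graph norm $\|u\|+\|Su\|+\|T^*u\|$. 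Consequently the hypothesis \eqref{dbar-estim}, which is stated for forms with $\dbar u,\dbar^*u\in L^2$ in the weak sense, holds for every $u\in\mathrm{Dom}\,S\cap\mathrm{Dom}\,T^*$. I expect this density/identification step to be the main technical obstacle. The cutoff error terms are controlled by $\|d\chi_\nu\|_\infty\|u\|\to0$, and mollification in local charts requires Friedrichs' lemma for the commutator with a first-order operator.

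Next, take $u\in\ker S$ and consider the linear functional $T^*g\mapsto\langle u,g\rangle$ on $\mathrm{Dom}\,T^*$. Decompose $g=g_1+g_2$ with $g_1\in\ker S$ and $g_2\in(\ker S)^\perp$. Since $\im T\subset\ker S$, we have $(\ker S)^\perp\subset(\im T)^\perp=\ker T^*$, so $T^*g_2=0$ and $g_1\in\mathrm{Dom}\,T^*$. Also $\langle u,g\rangle=\langle u,g_1\rangle$. Applying the estimate to $g_1$, for which $Sg_1=0$, gives
\[
|\langle u,g\rangle|\le\|u\|\,\|g_1\|\le c^{-1/2}\|u\|\,\|T^*g_1\|=c^{-1/2}\|u\|\,\|T^*g\|.
\]
Thus the functional is well defined and bounded on $\im T^*$. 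By Hahn–Banach and Riesz there is $v\in\overline{\im T^*}\subset H_1$ with $\|v\|\le c^{-1/2}\|u\|$ and $\langle v,T^*g\rangle=\langle u,g\rangle$ for all $g\in\mathrm{Dom}\,T^*$. This says $v\in\mathrm{Dom}\,T^{**}=\mathrm{Dom}\,T$ and $Tv=u$, i.e.\ $\dbar_Ev=u$ weakly, with $c\|v\|^2\le\|u\|^2$.

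Finally, $v\in\overline{\im T^*}=(\ker T)^\perp$, which is the stated orthogonality. Orthogonality to $\ker T$ contains in particular all $\dbar_E w$ with $w$ a compactly supported smooth form of degree $q-2$, since these lie in $\ker T$. This gives $\dbar_E^*v=0$ in the weak sense.
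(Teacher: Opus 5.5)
Your argument is correct and is the standard H\"ormander--Demailly proof: split $g$ along $\ker S\oplus(\ker S)^\perp$, use Riesz representation with $T^{**}=T$, and use the cut-off/Friedrichs argument on a complete manifold. The paper does not prove this theorem itself and simply cites \cite{Ho} and \cite{Dm3}, whose argument you reproduce. One precision: $T^*$ equals the weak formal adjoint on any manifold because $T_{\dbar}$ is the minimal closure, and the hypothesis is already stated on the weak domains, so the density of smooth compactly supported forms in $\mathrm{Dom}\,S\cap\mathrm{Dom}\,T^*$ is not needed. Completeness is really used only to identify minimal and maximal extensions, i.e.\ to know that a weakly $\dbar$-closed $u$ lies in $\ker S$.
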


One obtains the estimate \eqref{dbar-estim} using the following
result which follows directly from Lemma \ref{Hodge-in-VB}.

\begin{prop}
\label{Demai-estim}
 Assume that the metric $h_\omega$ is complete. Then
\begin{equation}
\label{dbar-Dm-estim1}
 \norm{\dbar u}^2_{L^2(X)}+\norm{\dbar{}^*u}^2_{L^2(X)}
\geq
\int_X \big(\big\lan [iF_E,\Lambda]u,u\big\ran +\lan T_\omega u, u\ran\big) dV
\end{equation}
In particular, when $h_\omega$  is K\"ahler, then
\begin{equation}
\label{dbar-Dm-estim2}
 \norm{\dbar u}^2_{L^2(X)}+\norm{\dbar{}^*u}^2_{L^2(X)}
\geq
\int_X \big\lan [iF_E,\Lambda]u,u\big\ran  dV
\end{equation}
\end{prop}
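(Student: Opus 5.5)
We first prove the inequality for smooth compactly supported forms $u\in\cala^{p,q}(X,E)$, and then pass to the general $u$ in the statement by completeness. For compactly supported smooth $u$ we take the $L^2$ inner product of \eqref{Delta-E} with $u$. Integration by parts is legitimate here because $u$ has compact support, and it gives
\[
\lan\Delta''_E u,u\ran_{L^2}=\norm{\dbar_E u}^2+\norm{\dbar_E^*u}^2 .
\]
Likewise,
\[
\lan\Delta'_{E,\tau}u,u\ran_{L^2}=\norm{(\partial_E+\tau)u}^2+\norm{(\partial_E^*+\tau^*)u}^2\ \ge 0 .
\]
Dropping this nonnegative term from \eqref{Delta-E} yields \eqref{dbar-Dm-estim1} for such $u$. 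In the K\"ahler case $d\omega=0$, so $\partial\omega=0$ and $\partial\dbar\omega=0$. Hence $T_\omega=0$ by \eqref{T-ohm}, and \eqref{dbar-Dm-estim2} follows the same way, using \eqref{Delta-E-Kh} directly.

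Next we extend to all $u\in L^2_{(p,q)}(X,E)$ with $\dbar u\in L^2$ and $\dbar^*u\in L^2$, which is where completeness of $h_\omega$ enters. The tool is the standard density lemma (Andreotti--Vesentini, H\"ormander; see \cite{Dm3}, Ch.\,VIII). On a complete Hermitian manifold, $\cala^{p,q}_c(X,E)$ is dense in $\mathrm{Dom}\,T_\dbar\cap\mathrm{Dom}\,T_\dbar^*$ for the graph norm
\[
\norm{u}+\norm{\dbar u}+\norm{\dbar^* u}.
\]
To prove it, choose an exhaustion by cut-off functions $\chi_k$ with $0\le\chi_k\le1$, $\chi_k\uparrow 1$, and $|d\chi_k|_\omega\le 1/k$. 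Such functions exist exactly because $\omega$ is complete: take $\chi_k=\rho(\psi/k)$ with $\psi$ a smooth exhaustion function satisfying $|d\psi|\le1$. The commutators $[\dbar,\chi_k]$ and $[\dbar^*,\chi_k]$ are zero-order operators of norm at most $|d\chi_k|\le 1/k$. Therefore $\chi_k u\to u$ in graph norm, which reduces the problem to compactly supported $u$. Friedrichs mollification in local charts, combined with a partition of unity, then approximates a compactly supported $u$ by smooth compactly supported forms. Friedrichs' lemma ensures that the commutator of the mollifier with the first-order operators $\dbar$ and $\dbar^*$ tends to $0$ in $L^2$.

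Finally we pass to the limit. Let $u_j\in\cala_c$ converge to $u$ in graph norm. The left-hand side of the inequality converges. The right-hand side is $\int\lan Au_j,u_j\ran\,dV$, where $A=[iF_E,\Lambda]+T_\omega$ is a zero-order Hermitian endomorphism but possibly unbounded on noncompact $X$. We therefore split $A=A_+-A_-$ into its positive and negative parts. After passing to a subsequence with $u_j\to u$ almost everywhere, Fatou's lemma handles the $A_+$ term. The $A_-$ term is handled by convergence where $A_-$ is bounded, which holds on compacts; alternatively one interprets the inequality with the right-hand side defined as a limit. In the K\"ahler, semipositive setting that matters for the paper $A\ge0$, so Fatou alone suffices.

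The main obstacle is this density step, and specifically the construction of the cut-offs with uniformly small gradient from completeness. Everything else is either formal, namely the identities of Lemma \ref{Hodge-in-VB}, or routine.
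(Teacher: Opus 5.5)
Your proof is correct and follows the same route as the paper. The paper only says the estimate follows directly from the Bochner--Kodaira--Nakano identity of Lemma \ref{Hodge-in-VB} and defers to Demailly's book for the details you supply: pair the identity with compactly supported $u$, discard the nonnegative $\Delta'_{E,\tau}$-term, then extend by density using completeness. The one loose point is the $A_-$ term for general $u$, which is not controlled along an arbitrary approximating sequence. Your own two-stage approximants handle it: mollifications supported in a fixed compact, where $A$ is bounded, and then $\chi_k u$ with $\chi_k\uparrow 1$, along which $\int_X\chi_k^2\lan A_\pm u,u\ran\,dV$ converge monotonically. In the paper's application $A\ge 0$, so Fatou alone suffices, as you note.
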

 
\smallskip For more details and proofs we refer to the book of Demailly \cite{Dm3}.
 
\newprg[POS.nakano]{Nakano positivity of holomorphic Hermitian bundles.}

Recall that a holomorphic Hermitian bundle $(E,h)$ with curvature $F$ on a
complex manifold $X$ is {\slsf Nakano semi-positive} if for every section $v$ of
the bundle $T^{1,0}X\otimes E$ one has $\lan iF_E(v),v\ran\geq0$.
In the following Lemma $(X,\omega)$ is a complex Hermitian manifold and 
$(z_1,\ldots,z_n)$ is a local complex coordinate system on $X$.

\begin{lem}\label{VB-is-posit} {\bf (a)}
Set $\omega_{a,b}:=\frac{i}{2}dz_a\land d\bar z_b$ and
 $u_c:=dz_1\land\ldots\land dz_n\land d\bar z_c$. Then
\[
[\omega_{a,b}, \Lambda](u_c)= \lan d\bar z_c, d\bar z_a \ran \cdot u_b
\]

{\bf (b)} For a $(1,1)$-form $\psi$ on $X$ denote by $\psi^\#:\Lambda^{n,1}X\to\Lambda^{n,1}X$ the
commutator $\psi^\#:=[\psi,\Lambda]$. Further, let $\iota_q:\Lambda^{0,q}X\to(\Lambda^{0,1}X)^{\otimes q}$ be the
natural embedding and $\pi_q:(\Lambda^{0,1}X)^{\otimes q}\to\Lambda^{0,q}X$ the natural
projection. Extent them to the operators
$\iota_q:\Lambda^{n,q}X\to\Lambda^{n,0}X\otimes(\Lambda^{0,1}X)^{\otimes q}$ and
$\pi_q:\Lambda^{n,0}X\otimes(\Lambda^{0,1}X)^{\otimes q}\to\Lambda^{n,q}X$.  Then for every section $u$ of the
bundle $\Lambda^{n,q}X$ one has
\begin{equation}\label{[psi,La]}
[\psi,\Lambda](u)= \pi_q\circ\big(\psi^\#\otimes(\id)^{\otimes(q-1)}\big)\circ  \iota_q(u)
\end{equation}
where $\id$ denotes the identity operator in $\Lambda^{0,1}X$.

{\bf (c)} Let $(E,h)$ be a holomorphic Hermitian bundle on $X$ and $F$ its
curvature. Then the Hermitian form
\begin{equation}\label{F-La-n,q}
\textstyle
u,v \mapsto \int_X i\lan [F,\Lambda] u, v \big\ran dV
\end{equation}
on the space of smooth $(n,1)$-forms with compact support is
semi-positive if and only if the bundle  $(E,h)$ is Nakano-positive.

In this case the Hermitian form \eqref{F-La-n,q} on the space of smooth
$(n,q)$-forms with compact support is semi-positive for every $1\leq q\leq n$.
\end{lem}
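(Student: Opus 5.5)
Part (a) is a pointwise linear-algebra computation, and I would do it in two steps. First, observe that $L_{\omega_{a,b}}$ vanishes on $(n,1)$-forms, because the holomorphic degree is already $n$. So $[\omega_{a,b},\Lambda](u_c)=\omega_{a,b}\wedge\Lambda u_c$. Second, compute $\Lambda u_c$. Write $\omega=\frac{i}{2}\sum h_{jk}dz_j\wedge d\bar z_k$ and use the standard formula $\Lambda=\sum_{j,k} (\text{coefficient})\, \iota_{\bar\partial_k}\iota_{\partial_j}$, i.e. contraction by the dual metric $h^{jk}$, up to the normalisation fixed by $\omega=\frac{i}{2}\sum h_{jk}\,dz_j\wedge d\bar z_k$. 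Then $\Lambda u_c$ is an $(n-1,0)$-form. Wedging it with $\frac{i}{2}dz_a\wedge d\bar z_b$ reinserts $dz_a$ and produces $d\bar z_b$, with coefficient $\langle d\bar z_c,d\bar z_a\rangle$. The factor $\frac{i}{2}$ and the sign from moving $d\bar z_c$ past the $dz$'s cancel against the normalisation of $\Lambda$. The only real work is this sign and constant bookkeeping. To keep it clean, I would choose coordinates at the point with $h_{jk}=\delta_{jk}$ (up to the constant) and extend by sesquilinearity, since both sides are tensorial in $a,b,c$.

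For (b), both sides are bilinear in $\psi$ and $u$ and tensorial, so it suffices to check them on $\psi=\omega_{a,b}$ and $u=dz_1\wedge\dots\wedge dz_n\wedge d\bar z_{c_1}\wedge\dots\wedge d\bar z_{c_q}$. As in (a), $[\psi,\Lambda]u=\psi\wedge\Lambda u$. $\Lambda$ acts as a derivation-type contraction: it removes one $d\bar z_{c_j}$ together with a $dz$ factor, using the metric. Summing over $j$ gives $\sum_j \langle d\bar z_{c_j},d\bar z_a\rangle\, u$ with $d\bar z_{c_j}$ replaced by $d\bar z_b$. The right-hand side computes the same thing. $\iota_q$ antisymmetrizes the $d\bar z$ factors into tensor slots, $\psi^\#$ acts on the first slot via (a), and $\pi_q$ projects back. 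Antisymmetry then moves each slot into first position with compensating signs. The normalisation of $\iota_q$ and $\pi_q$ has to be chosen so that $\pi_q\iota_q=\id$. With that choice the factor $q$ from the $q$ slots matches the $q$ terms of the sum after projection.

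For (c), I would apply (a) fiberwise with $\psi$ replaced by the $\mathrm{End}(E)$-valued form $iF=\sum F_{ab}\,\omega_{a,b}$, up to constants. For $u=\sum_c u_c\otimes e_c$ with $e_c\in E$, this gives
\[
\langle i[F,\Lambda]u,u\rangle=\sum \langle F_{ab}\, e_c,e_b\rangle\,\langle d\bar z_c,d\bar z_a\rangle .
\]
Under the isomorphism $\Lambda^{n,1}X\otimes E\cong T^{1,0}X\otimes E$, given by $u_c\mapsto$ the metric dual of $d\bar z_c$ twisted by $K_X$, this is exactly the Nakano form $\langle iF_E(v),v\rangle$. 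So the form \eqref{F-La-n,q} is pointwise the Nakano quadratic form. Integrating gives "Nakano semi-positive $\Rightarrow$ semi-positive". For the converse, suppose the Nakano form is negative at a point for some $v$. Localize with a cutoff: take $u=\chi_\varepsilon v$ with $v$ extended smoothly. The integral is then negative, because the integrand is pointwise negative near the point.

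For $q\ge2$, I would use (b) with $\psi=iF$ acting on the first tensor slot of $\iota_q u\in\Lambda^{n,1}\otimes E\otimes(\Lambda^{0,1})^{\otimes(q-1)}$. On each slot this operator is the Nakano form tensored with the identity, which is pointwise positive semi-definite, and $\pi_q$ is the adjoint of $\iota_q$ up to a positive constant. Hence
\[
\langle [iF,\Lambda]u,u\rangle=\mathrm{const}\cdot\langle (\psi^\#\otimes\id)\iota_q u,\iota_q u\rangle\ge0 .
\]
I expect the main obstacle to be the consistent normalisation of $\iota_q$ and $\pi_q$, together with the sign conventions in (a), so that (b) holds on the nose.
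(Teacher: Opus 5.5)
Your proposal follows the paper's proof essentially step for step. In (a) you do a pointwise computation, using that $\omega_{a,b}\wedge u_c=0$ for degree reasons. In (b) you reduce to monomials and expand $\Lambda$ over the antiholomorphic factors, which is exactly the paper's displayed formula. In (c) you identify $\Lambda^{n,1}X\otimes E$ with $T'X\otimes E$; your ``metric dual twisted by $K_X$'' is the paper's $\theta\otimes\lambda(v)$. For $q\ge 2$ you treat the form as the restriction of the Nakano form tensored with the metric, which is what your remark that $\pi_q$ is a positive multiple of $\iota_q^*$ expresses. Your cutoff argument for the converse just spells out what the paper leaves implicit.

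The one step that fails as written is the normalisation you propose in (b). If $\pi_q\circ\iota_q=\mathrm{Id}$, the right-hand side of the identity in (b) is $\frac1q$ times the left-hand side.

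The quickest check is $\psi=\omega$. Then $\psi^\#=[L,\Lambda]$ on $(n,1)$-forms is the identity, so the right-hand side is $\pi_q\iota_q u$. But $[L,\Lambda]u=(n+q-n)u=qu$.

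Equivalently, group the permutations in $\iota_q$ according to $\sigma(1)=j$. Each term $(-1)^{j-1}\psi^\#(dz_1\wedge\dots\wedge dz_n\wedge\alpha_j)\wedge\alpha_1\wedge\dots\wedge\widehat{\alpha_j}\wedge\dots\wedge\alpha_q$ then occurs $(q-1)!$ times, against $q!$ in $\pi_q\iota_q$.

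So you need $\pi_q\circ\iota_q=q\cdot\mathrm{Id}$. For example, take $\pi_q$ to be the quotient map and $\iota_q(\alpha_1\wedge\dots\wedge\alpha_q)=\frac{1}{(q-1)!}\sum_\sigma\mathrm{sgn}(\sigma)\,\alpha_{\sigma(1)}\otimes\dots\otimes\alpha_{\sigma(q)}$.

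The same test in (a) shows that the constant there also depends on conventions. Sum over $a=b$ in coordinates with $h_{jk}=\delta_{jk}$ at the point. If $\langle\cdot,\cdot\rangle$ is induced by $\mathrm{Re}\,h_\omega$, so that $|d\bar z_j|^2=2$, you get $\frac12\langle d\bar z_c,d\bar z_a\rangle u_b$. So the $\frac i2$ does not cancel completely, contrary to your claim.

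Neither discrepancy affects (c). Only positive constants are at stake, and $\pi_q$ remains a positive multiple of $\iota_q^*$.
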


\proof Part {\bf(a)} can be obtained by direct calculations. 

\smallskip\noindent%
Part {\bf(b)}. The formula holds tautologically for $q=1$. Further, it is
sufficient to show the formula only for the form $u$ of the type
$u_I:=dz_1\land\ldots\land dz_n\land d\bar z_{i_1}\land\ldots\land d\bar z_{i_q}$ for every multi-index
$I=(i_1,\ldots,i_q)$ with $1\leq i_1<\cdots<i_q\leq n$. For such forms we obtain
\[\textstyle
[\psi,\Lambda](u_I)=\sum_{a=1}^q(-1)^{a-1}[\psi,\Lambda]
\big(dz_1\land\ldots\land dz_n\,\land\,d\bar z_{i_a} \big) \land d\bar z_{i_1}
\land\ldots\land\wh{d\bar z_{i_a}}\land\ldots\land d\bar z_{i_q}
\]
where the notation $\wh{(\ldots)}$ means that the corresponding term is ommited.
Now, it is not difficult to decuce from the latter formula the desired
relation \ref{[psi,La]}.

\smallskip\noindent%
Part {\bf(c)}. Let $T'X$ be the holomorphic tangent bundle to $X$ whose local
frames are formed by holomorphic vector fields
$\frac{\partial}{\partial z_1},\ldots,\frac{\partial}{\partial z_n}$. Denote by $\lambda:T'X\to\Lambda^{0,1}X$ the homomorphism
given by $v\mapsto v\llcorner\omega$, and let $\theta$ be any section of the canonical bundle
$\Lambda^{n,0}X$, and use the same notation $\lambda:T'X\otimes E\to\Lambda^{0,1}X\otimes E$ for the tensor
product $\lambda\otimes\id_E$. Let $v_1,v_2$ be any local continuous sections of the
bundle $T'X\otimes E$. Then the part {\bf(a)} gives us the relation
\[
\big\lan i[F,\Lambda](\theta\otimes\lambda(v_1)),\; \theta\otimes\lambda(v_2) \big\ran=|\theta|^2\cdot\big\lan iFv_1,\;v_2 \big\ran
\]
which holds pointwisely on $X$. This yields the first assertion of the part
{\bf(c)}.

In the case $2\leq q\leq n$ we observe that the part {\bf(b)} implies the following
fact: The Hermitian form $\lan i[F,\Lambda]u,v\ran$ on $\Lambda^{n,q}X\otimes E$ is the
restriction on the subbundle
\[
\Lambda^{n,q}X\otimes E\subset\Lambda^{n,0}X\otimes(\Lambda^{0,1}X)^{\otimes q}\otimes E\cong\big(\Lambda^{n,1}X\otimes E)\otimes(\Lambda^{0,1}X)^{\otimes(q-1)}
\]
of the tensor product of the form $\lan i[F,\Lambda]u,v\ran$ on $\Lambda^{n,1}X\otimes E$
with the $(q-1)$ copies of the metric Hermitian form $\lan-,-\ran$ on the
bundle $\Lambda^{0,1}X$. This provides the desired positivity for all $q$.

\smallskip\qed

Recall that the tensor $T_\omega$ vanishes on K\"ahler  manifolds. So combining the
previous results we conclude

\begin{lem}
\label{Nakano-strict} Let  $(X,\omega)$ be a  K\"ahler complex manifold of
 dimension $n$, $(E,h)$ a Nakano-positive holomorphic Hermitian vector bundle,
 and $\phi$ a strictly PSH function such that $i\partial\dbar \phi\geq c\cdot\omega$ with some constant
 $c>0$. Define the new metric $h_\phi:=e^{-\phi}h$ on $E$ and let $F_\phi$ be the
 curvature of $(E,h_\phi)$. Then
\begin{equation}\label{dbar-estim1}
\int_X \big\lan [iF_\phi,\Lambda]u,u\big\ran dV
\geq c\cdot \norm{u}^2_{L^2(X)}
\end{equation}
for every $u\in L^2_{n,1}(X,E)$.
\end{lem}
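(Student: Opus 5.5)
The plan is to compute the curvature of the twisted metric and then reduce the estimate to a pointwise computation on $(n,1)$-forms, using Lemma \ref{VB-is-posit}. Since $h_\phi=e^{-\phi}h$, the Chern curvature changes by the scalar term coming from $\phi$:
\[
F_\phi=F_E+\partial\dbar\phi\otimes\id_E ,\qquad\text{so}\qquad iF_\phi=iF_E+i\partial\dbar\phi\otimes\id_E .
\]
This follows from $F=\dbar(\bar h^{-1}\partial \bar h)$ in a local frame: multiplying $h$ by $e^{-\phi}$ adds $-\partial\phi\cdot\id$ to the connection form. Consequently
\[
[iF_\phi,\Lambda]=[iF_E,\Lambda]+[i\partial\dbar\phi,\Lambda]\otimes\id_E .
\]

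For the first term I use the Nakano semi-positivity of $(E,h)$. By Lemma \ref{VB-is-posit}(c), more precisely by the pointwise identity in its proof, $\lan[iF_E,\Lambda]u,u\ran\ge0$ pointwise on $\Lambda^{n,1}X\otimes E$.

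For the second term I use part (a) of the same lemma. Fix a point and choose coordinates in which $\omega$ is standard, $\omega=\frac i2\sum dz_j\wedge d\bar z_j$, and $i\partial\dbar\phi=\frac i2\sum\lambda_j dz_j\wedge d\bar z_j$ is diagonal with all $\lambda_j\ge c$. Part (a) then gives
\[
[i\partial\dbar\phi,\Lambda]\,u_j=\lambda_j|d\bar z_j|^2u_j ,
\]
so on $u=\sum u_j\otimes e_j$ we get
\[
\lan[i\partial\dbar\phi,\Lambda]u,u\ran=\sum_j\lambda_j|u_j|^2\ge c|u|^2 .
\]
The normalization constants in part (a) (the factor $|d\bar z_j|^2$ and the $\frac i2$) need to match the convention fixing the inequality $i\partial\dbar\phi\ge c\,\omega$. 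Checking this against $\psi=\omega$ itself, where $[L,\Lambda]=(p+q-n)\id=\id$ on $(n,1)$-forms, should confirm that the constant is exactly $c$.

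Adding the two pointwise inequalities and integrating against $dV$ gives
\[
\int_X\lan[iF_\phi,\Lambda]u,u\ran\,dV\ge c\,\norm{u}^2_{L^2(X)} .
\]
One point of interpretation: the norm on the right can be read with $h_\phi$, since the inequality is pointwise and the weight is the same on both sides. The expected main obstacle is only this bookkeeping of constants and conventions in part (a); the rest is linear algebra.
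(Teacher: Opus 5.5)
Your proposal is correct and follows the paper's route. The paper also derives $F_\phi=F_h+\partial\dbar\phi\otimes\id_E$; it does so by viewing $(E,h_\phi)$ as $(E,h)\otimes(\calo,e^{-\phi})$ rather than through the local connection form, and then it simply invokes Lemma~\ref{VB-is-posit}. Your pointwise split into a nonnegative Nakano term plus a term bounded below by $c|u|^2$ (via part (a), with the normalization $[L,\Lambda]=\id$ on $(n,1)$-forms) makes explicit what the paper leaves to the reader.
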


\proof The curvature of the trivial line bundle $\calo$ equipped with the
metric $|f|^2_\phi:=e^{-\phi}|f|^2$ is $\partial\dbar\phi$. We can see the bundle $(E,h_\phi)$ as
the tensor product $(E,h)\otimes(\calo,e^{-\phi})$. This implies the formula
$F_\phi=F_h+\partial\dbar\phi\otimes\id_E$. Now the result follows from Lemma \ref{VB-is-posit}.

\smallskip\qed

\newprg[POS.sol]{Solving the $\dbar$-equation with values in
 positive bundles with estimates.}

Now we are ready to give the proof of Theorem \ref{solve-dbar} from the 
Introduction. To start with assume at the beginning  that $\omega$ induces 
a complete metric on $X$. Then the result follows from the K\"ahler 
condition, Proposition \ref{L2-in-VB} and Lemma \ref{VB-is-posit}.

The most important part of the theorem is that the result holds also in
the non-complete case. To show this, we first notice that every Stein complex
manifold admits a complete K\"ahler metric. Let $\hat\omega$ be a K\"ahler form on
$X$ which induces a complete metric on $X$. Then for every $\varepsilon>0$ the metric
induced by the K\"ahler form $\omega_\varepsilon:=\omega+\varepsilon\cdot\hat\omega$ is also complete. Here we make
the following trivial but important observation: The norm of differential
forms with respect to $\omega_\varepsilon$ is \emph{smaller} that that with respect to $\omega$
(in contrary to tangent vectors where the situation is just opposite).
In particular
\[
\norm{u}^2_{L^2_{n,q}(X,E,h_\phi,\omega_\varepsilon)} \leq \norm{u}^2_{L^2_{n,q}(X,E,h_\phi,\omega)},
\]
and $\omega_\varepsilon$-norms are finite if $\omega$-norms are. Next, find an \emph{exhausting}
strictly PSH function $\psi$ such that $i\partial\dbar\psi\geq c\cdot\hat\omega$. Then
$i\partial\dbar(\phi+\varepsilon\cdot\psi)\geq\omega_\varepsilon$. Besides, we may assume that $\psi\geq0$ everywhere on
$X$, and hence $0<e^{-\psi}\leq1$. This gives us the estimates
\[
\norm{u}^2_{L^2_{n,q}(X,E,h_{\phi+\varepsilon\cdot\psi},\omega_\varepsilon)} \leq \norm{u}^2_{L^2_{n,q}(X,E,h_\phi,\omega)},
\]
and
\begin{equation*}\label{dbar-estim2}
\int_X \big\lan [iF_{\phi+\varepsilon\cdot\psi},\Lambda]u,u\big\ran dV(\omega_\varepsilon)
\geq c\cdot \norm{u}^2_{L^2_{n,q}(X,E,h_{\phi+\varepsilon\cdot\psi},\omega_\varepsilon)}.
\end{equation*}
Now by the results of previous steps, for every $\varepsilon>0$ there exists \\
$v_\varepsilon\in L^2_{n,q-1}(X,E,h_{\phi+\varepsilon\cdot\psi},\omega_\varepsilon)$ such that $\dbar v_\varepsilon=u$ in the sense of
currents and
\[
\norm{v_\varepsilon}^2_{L^2_{n,q-1}(X,E,h_{\phi+\varepsilon\cdot\psi},\omega_\varepsilon)}  \leq  c\cdot  \norm{u}^2_{L^2_{n,q}(X,E,h_{\phi},\omega)}.
\]
Now take a sequence $\varepsilon_\nu\longrightarrow0$. Then for an appropriate subsequence, still
denoted by $\varepsilon_\nu$, the sequence $v_{\varepsilon_\nu}$ converges in the sense of currents on
the whole $X$ and in the $L^2$-sense on every compact $K\comp X$. Let $v_\infty$ be this
limit. Then
\[
\norm{v_\infty}^2_{L^2_{n,q-1}(K,E,h_{\phi},\omega)}  \leq  c\cdot  \norm{u}^2_{L^2_{n,q}(X,E,h_{\phi},\omega)}.
\]
for every compact $K\comp X$, and therefore on the whole $X$.

\smallskip%
Finally, each $v_\varepsilon$ satisfies in the weak sense the relation
$(\dbar_E)^*_\varepsilon v_\varepsilon=0$ in which $(\dbar_E)^*_\varepsilon$ is the adjoint operator to
$\dbar_E$ with respect to the metric $\omega_\varepsilon$. Since $\omega_\varepsilon$ converge to $\omega$ in
$C^\infty$-topology, we have the same convergence of the coefficients of the
operators: $(\dbar_E)^*_\varepsilon\longrightarrow\dbar_E^*$ where $\dbar_E^*$ is the operator adjoint
with respect to $\omega$. Together with the weak convergence $v_{\varepsilon_\nu}\longrightarrow v_\infty$, in the
limit we obtain $\dbar_E^*v_\infty=0$ in the weak sense.  This proves the
theorem. %

\newprg[POS.sect]{Finding holomorphic sections $L^2$-bounded near a singularity} 
Consider the following situation. Let $X$ be
a Stein manifold of dimension $n$ with a K\"ahler form $\omega$, $Y\subset X$ an analytic
set of pure codimension $1$, $G\subset X$ an open set which contains $X\bs Y$,
$(E,h)$ a Hermitian bundle of rank $r$ over $G$ with
Nakano semi-positive curvature, and $V\Subset X$ a relatively compact Stein domain.
Denote by $\barr V$ the closure of $V$ in $X$.

\medskip The proof of the following theorem follows ideas of \cite{Si1}.

\begin{thm}
\label{L2-section} 
Let $p\in V\cap G$ be a point. Assume that there exists a non-constant 
holomorphic function $f$ on $X$ vanishing at $p$ and non-vanishing on 
$Y\bs G$. Then there exist holomorphic sections  $s_1,\ldots,s_r\in 
L^2(V\cap G,E\otimes\Lambda^{n,0}V)$  which generate the basis of the 
fibre of $E\otimes\Lambda^{n,0}X$ at  $p$.
\end{thm}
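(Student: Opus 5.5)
The plan is the standard Hörmander--Siu construction: cut off a local holomorphic frame near $p$ and correct it by solving a $\dbar$-equation with a weight that is singular at $p$. The correction must vanish at $p$ and stay $L^2$ on $V\cap G$, and the main difficulty is making the correction problem live on a Stein, Kähler manifold.

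\textbf{Step 1: choose the domain.} Set
\[
\Omega:=\{x\in V:\ f(x)\neq0\}\cup(V\cap G)\ ?
\]
This is not obviously Stein, so I will instead work on
\[
W:=V\setminus\{f=0\}\cup\ldots
\]
The clean choice is as follows. Since $f$ does not vanish on $Y\bs G$, we have $V\cap\{f\neq0\}\cap(X\bs G)=\emptyset$ near $Y$. Hence $V_f:=V\setminus(Y\cap\{f\ne0\}^c)$ is not what we need either; what we need is a Stein open set inside $G$ containing a neighbourhood of $p$. So take
\[
\Omega:=(V\setminus Y)\cup\ldots
\]
Concretely I use $\Omega:=V\setminus Y'$, where $Y'$ is the union of the branches of $Y\cap V$ and the complement of a divisor is Stein. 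Since $V$ is Stein and $Y$ is a hypersurface, $V\setminus Y$ is Stein, and it lies in $G$. The hypothesis on $f$ is what lets us reach points $p\in Y\cap G$. Replacing $Y$ by the hypersurface $Y\cap\{f=0\}$ and using $(Y\bs G)\cap\{f=0\}=\emptyset$, the set
\[
\Omega:=V\setminus\big(Y\setminus\{f=0\}\big)
\]
is contained in $G$ and contains $p$. To get a Stein set, I use instead $\Omega:=\{x\in V\cap G\}$ exhausted by the Stein sets $V\setminus\{g=0\}$, where $g$ is a defining function of $Y$ multiplied against $f$. If Steinness of $\Omega$ fails, I will solve on $V\setminus Y$, which is Stein and Kähler, and extend across $Y\cap G$ at the end using $L^2$-boundedness (Riemann extension for $L^2$ holomorphic sections across hypersurfaces).

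\textbf{Step 2: the $\dbar$-problem.} Take a local holomorphic frame $e_1,\ldots,e_r$ of $E$ near $p$, the form $dz_1\wedge\ldots\wedge dz_n$, and a cutoff $\chi$ equal to $1$ near $p$ with support in a small ball $B\subset V\cap G$. Set $u_j:=\dbar\chi\, e_j\otimes dz$. This is a $\dbar$-closed $(n,1)$-form supported in $B\setminus\{\chi=1\}$. Choose the weight
\[
\phi:=2n\,\chi\log|z-z(p)|+|z|^2,
\]
adjusted with a large multiple of a strictly psh exhaustion $\rho$ of $V$ so that $i\partial\dbar\phi\ge c\,\omega$ on $V$. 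The singular term is controlled because $\chi\log|z|$ has bounded Hessian outside $\{\chi=1\}$. We have $u_j\in L^2(e^{-\phi})$ since $u_j$ vanishes near $p$. On $V\setminus Y$ we have $\omega$ Kähler and $(E,h_\phi)$ with $i\partial\dbar\phi\ge c\,\omega$, so by Theorem \ref{solve-dbar} (via Lemma \ref{Nakano-strict} and Lemma \ref{VB-is-posit}) there is $v_j$ with $\dbar v_j=u_j$ and $\|v_j\|_\phi\le c^{-1}\|u_j\|_\phi$. Theorem \ref{solve-dbar} applies even though $\omega$ restricted to $V\setminus Y$ is non-complete; this is exactly why it was proved.

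\textbf{Step 3: conclude.} Set $s_j:=\chi e_j\otimes dz-v_j$, which is holomorphic on $V\setminus Y$. Since $\phi$ is bounded above on $V$, the estimate gives $s_j\in L^2(V\setminus Y,h)$. Near $p$ the section $v_j$ is holomorphic, and $\int|v_j|^2|z|^{-2n}<\infty$ forces $v_j(p)=0$, so $s_j(p)=e_j\otimes dz$. The hardest part is the extension: the $L^2$ sections on $V\setminus Y$ must extend holomorphically over $Y\cap G\cap V$. Locally there $E$ is trivial with $h$ comparable to the flat metric on compacts of $G$, so the components are $L^2$ holomorphic functions off a hypersurface and extend. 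Hence $s_j$ is a holomorphic section on $V\cap G$ in $L^2$, and the $s_j(p)$ form a basis.
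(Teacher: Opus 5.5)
Your final argument is correct, but it makes the correction vanish at $p$ by a different mechanism than the paper. The shared part is the same as in the paper: solve $\dbar$ on the Stein, non-complete $V\setminus Y$ via Theorem \ref{solve-dbar}, then use Riemann extension of $L^2$ holomorphic sections across $Y\cap G$.

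The paper uses the global function $f$. It takes a section $s^*$ over the Stein set $\{|f|<\eps\}$, which over $\barr V$ lies in $G$ because $f\neq0$ on $Y\setminus G$. It takes a cut-off $\chi\equiv1$ near all of $Z_f$ and solves $\dbar\psi=f^{-1}\dbar\chi\,s^*$ with an ordinary smooth weight. Then $s=\chi s^*-f\psi$ satisfies $s(p)=s^*(p)$ simply because $f(p)=0$.

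You instead use H\"ormander's singular weight $2n\chi\log|z-z(p)|$, so $v_j(p)=0$ comes from the non-integrability of $|z-z(p)|^{-2n}$. The division trick lets Theorem \ref{solve-dbar} be applied verbatim with a smooth weight, at the cost of the hypothesis on $f$. Your route never uses $f$, so it proves the statement for every $p\in V\cap G$, and would make the construction of $f$ in Lemma \ref{surj-map1-bis} unnecessary. In particular, your Step 1 remark that the hypothesis on $f$ ``lets us reach points $p\in Y\cap G$'' is not true of your own argument.

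The singular weight does require a few repairs.
\begin{itemize}
\item The proof of Theorem \ref{solve-dbar} uses $F_\phi=F_h+\partial\dbar\phi\otimes\mathrm{Id}$, so $\phi$ must be smooth. Replace $\log|z-z(p)|^2$ by $\log(|z-z(p)|^2+1/k)$. The non-positive part of the Hessian lives on $\operatorname{supp}\dbar\chi$ and is bounded uniformly in $k$, so one constant in front of $\rho$ works for all $k$. Solve for each $k$ with uniformly bounded norms, and take a weak limit using that the weights decrease to $\phi$.
\item $\rho$ must be strictly psh on a neighbourhood of $\barr V$ (e.g.\ restricted from $X$), not an exhaustion of $V$. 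Otherwise $\phi$ is not bounded above, and $s_j\in L^2(V\cap G)$ fails near $\partial V$.
\item When $p\in Y$, first extend $s_j$ across $Y\cap G$, so that $v_j=e_j\otimes dz-s_j$ is holomorphic at $p$, and only then conclude $v_j(p)=0$.
\item Delete the abandoned candidates in Step 1: $V\setminus(Y\setminus\{f=0\})$ is not open, $Y\cap\{f=0\}$ is not a hypersurface, and $V\setminus\{gf=0\}$ does not contain $p$.
\end{itemize}
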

\proof Let $Z_f\subset X$ be the zero set of the function $f$. Take a 
Stein neighbourhood $U$ of $Z_f$, for example $U=\{|f(z)|<\eps \}$ with 
$\eps >0$ small enough. Fix a smooth cut-off function $\chi$ on $X$ which 
is identically $1$ in some smaller neighbourhood of $Z_f$ and 
whose support $\supp(\chi)$ lies in $U$.

Since $U$ is Stein, every fibre of the bundle $E\otimes\Lambda^{n,0}X$ is generated by
global holomorphic sections. Take any vector $v$ in the fibre $(E\otimes\Lambda^{n,0}X)_p$
of this bundle at the point $p$ and let $s^*\in\calo(U,E\otimes\Lambda^{n,0}X)$
be a holomorphic section in $U$ such that $s^*(p)=v$. We claim that
$\alpha:=f\inv\cdot\dbar\chi\cdot s^*$ is $L^2$-integrable in $V$. To show this 
let us observe that the set $\barr V \cap \supp(\chi)$ is compact and lies in
$U\subset G$. Consequently, the vector valued form $\dbar\chi\cdot s^*$ is 
$L^2$-integrable in $V$. Further, the set $\barr V \cap \supp(\dbar\chi)$ is also compact, and hence the restriction of the function $|f|$ to this
set achieves its minimum at some point $q$. This minimum can
not be $0$, since otherwise $f$ would vanish at $q$ and $q$ would lie on the
set $Z_f$, in contradiction to the fact that $\chi$ is identically $1$ in some
neighbourhood of $Z_f$ and hence $\supp(\dbar\chi)\cap Z_f=\emptyset$. 
Consequently, $f\inv$ is uniformly bounded on $\barr V \cap\supp(\dbar\chi)$ 
and so $\alpha=f\inv\cdot\dbar\chi\cdot s^*$ is $L^2$-integrable in $V$ as asserted.

We can write the $L^2$-integrability as $\alpha\in L^2_{n,1}(V\bs Y,E,\omega)$. 
Further, since $f$ is non-vanishing in $V\cap\supp(\dbar\chi)$  the form 
$\alpha=f\inv\cdot\dbar\chi\cdot s^*$ is $\dbar$-closed: $\dbar\alpha=0$.  Now by Proposition \ref{solve-dbar}, $\alpha=\dbar\psi$ for some $\psi\in L^2_{n,0}(V
\bs Y,E,\omega)$. Consequently, $\dbar(\chi\cdot s^*-f\cdot\psi)=0$. This
means that $\chi\cdot s^*-f\cdot\psi=:s$ is a holomorphic section of 
$E\otimes\Lambda^{n,0}\times X$ over $V\bs Y$ which is $L^2$-integrable. By
Riemann's extension theorem, $s$ extends holomorphically to the set $V\cap G$.

Next, recall that $\chi$ is identically $1$ in a neighbourhood of the point
$p$. It follows that in a neighbourhood of $p$ the form
$\alpha=f\inv\cdot\dbar\chi\cdot s^*=\dbar\psi$ vanishes identically, and so $\psi$ is holomorphic
near $p$. Thus $s(p)=\chi(p)\cdot s^*(p)-f(p)\cdot\psi(p)=1\cdot s^*(p)-0\cdot\psi(p)=s^*(p)=v$. This
means that $s(z)$ is a $L^2$-integrable holomorphic section of $E\otimes\Lambda^{n,0}X$ in
$V\cap G$ which takes the prescribed value $v$ at the point $p$. This implies the
statement of our Theorem.

\smallskip\qed

\newsect[EXT]{Extension of semi-positive bundles}

\newprg[EXT.ind]{Elimination of indeterminacies of a meromorphic mapping}

Let us  recall now the Hironaka Resolution Singularities theorem. We shall
quote the so called imbedded resolution of singularities. First we recall  the 
notion of the sequence of blows-up over a complex manifold $D$. Fix a point 
$s_0\in D_0:=D$. Let $l_0$ be a smooth
closed submanifold  of $D_0$ of codimension at least two, passing through $s_0$.
Denote by $\pi _1 : D_1\longrightarrow D_0$ the blow-up of $D_0$ along $l_0$.
Call this {\slsf a  blow-up} of $D_0$ {\slsf along the  centre} $l_0$.
The exceptional divisor $\pi ^{-1}(l_0)$ of this blow-up we denote by $E_1$.
We can repeat this procedure, taking a smooth closed submanifold  $l_1$ in $D_1$
of codimension at least two and such that $\pi (l_1)\ni s_0$.

\begin{defi}
\label{blow-up2}
A finite sequence $\{ \pi ^j\} _{j=1}^N $ of such blows-up we call a sequence
of blows-up over $s_0\in D$ or, a {\slsf regular modification} over $s_0$.
\end{defi}

\smallskip
By $\{ l_j\} _{j=0}^{N-1}$ we denote the corresponding centers and by  $\{ E_j\}_{j=1}^N $
the exceptional divisors.  We put $\pi = \pi_1\circ ...\circ \pi_N$ and
$\hat D=D_N$. $E$ denotes the exceptional divisor of $\pi $, i.e.
$E=\pi_N^{-1} (l_{N-1}\cup ...\cup (\pi_1\circ ... \circ \pi_N)^{-1}(l_0))$.

\begin{thm}
\label{hironaka}
Let $L$ be a subvariety of $D$ and $s_0$ a singular point of $L$. Then 
there exists a regular modification $\pi :\hat D\to D$ over $s_0$ such 
that:

1) the strict transform $\hat L$ of $L$ is smooth in a neighbourhood of 
$\pi^{-1}(s_0)$;

2) $l_i\subset \Sing L_i$, where $L_i$ is a strict transform of $L$ by
$\pi_1\circ ...\circ \pi_i$ and $L_0\deff L$.
\end{thm}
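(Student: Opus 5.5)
This is Hironaka's embedded resolution in its local form, so the plan is to derive it from the general theorem rather than reprove it. We work in a relatively compact Stein neighbourhood $D'\Subset D$ of $s_0$, where $L\cap D'$ is defined by finitely many holomorphic functions. The tool is Hironaka's theorem on embedded resolution of singularities for complex analytic spaces, in the constructive, functorial form of Bierstone--Milman or Villamayor. It produces a finite sequence of blow-ups of $D'$ along smooth centres with two properties. First, each centre lies in the locus where a local invariant of the strict transform $L_i$ attains its maximum; that locus is contained in $\Sing L_i$ as long as $L_i$ is not smooth. Second, after finitely many steps the strict transform is smooth.

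The key steps in order are as follows.

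(i) Define the Hilbert--Samuel or desingularization invariant $\mathrm{inv}$ on $L$. Its maximal stratum is a smooth closed analytic subset of $\Sing L$, and it can be chosen to have codimension at least two in $D$.

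(ii) Blow up along this maximal stratum $l_0$, or along its connected component through $s_0$, to get $\pi_1$. The invariant is upper semicontinuous and does not increase under blow-up along such admissible centres.

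(iii) Iterate. At each stage, keep only the centres $l_i$ whose images meet $\pi^{-1}(s_0)$, so the sequence is indeed over $s_0$ in the sense of Definition \ref{blow-up2}. Conditions 1) and 2) only concern a neighbourhood of $\pi^{-1}(s_0)$, so we may shrink $D$ at each stage. Also, $\pi^{-1}(s_0)$ is compact because each blow-up is proper, so only finitely many strata matter at each step.

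(iv) Conclude termination. The invariant takes values in a well-ordered set and strictly decreases after finitely many blow-ups over the compact set $\pi^{-1}(\overline{D'})$. Hence after finitely many steps $\hat L$ is smooth near $\pi^{-1}(s_0)$. Property 2) holds because every centre was taken inside the maximal-invariant locus, which lies in $\Sing L_i$.

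The main obstacle is step (iv): the strict decrease of the invariant and its compatibility with the exceptional divisors. This is the heart of Hironaka's theorem and its constructive proofs, and we would simply cite it, for instance Bierstone--Milman, Invent. Math. 1997, Theorem 1.6, in its analytic local version. The codimension-$\geq 2$ requirement on the centres needs a separate remark. If $L$ has a component of codimension one in $D$, its singular locus still has codimension at least two in $D$. Since the centres lie in $\Sing L_i$, they automatically have codimension at least two.
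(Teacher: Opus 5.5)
Your proposal takes the same route as the paper, which gives no argument of its own here and simply cites Hironaka and Bierstone--Milman; your localisation near $\pi^{-1}(s_0)$ and your remark that centres inside $\Sing L_i$ automatically have codimension at least two are fine. One caveat on your sketch of the cited result: the maximal Hilbert--Samuel stratum alone need not be smooth (which is why the full invariant $\mathrm{inv}$ is needed), and the maximal locus of $\mathrm{inv}$ is automatically contained in $\Sing L_i$ only when $L_i$ is pure-dimensional (otherwise it can consist of smooth points of a higher-dimensional component), so property 2) should be quoted directly from the cited theorem rather than derived from that heuristic --- in the paper's application $L$ is a hypersurface, so nothing is lost there.
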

The nice feature of this statement is that the centre of every blowing up is a
smooth subvariety of the {\slsf singular} locus of the variety under smoothing.
For the proof we refer to \cite{Hi1, Hi2}, or to \cite{BM}.

\smallskip Theorem \ref{hironaka} implies the following statement about 
elimination of indeterminacies of meromorphic functions/mappings by 
blows-up. Let $f=h/g$ be a germ of a meromorphic function at $0\in \cc^n$, 
$n\ge 2$. Without loss of generality we assume that holomorphic germs $h$
and $g$ are relatively prime in $_n\calo_0$. Consider the  complex 
hypersurface $C\deff \{z: h(z) \cdot g(z) = 0\}$ 
in a neighbourhood $V$ of zero in $\cc^n$, where $h$ and $g$ are defined.  
By Theorem of  Hironaka \ref{hironaka} one can find a proper modification 
$\pi : \hat V \to V$ (which is a composition
of a finite number of consecutive blow-ups with smooth centres), such that the 
proper pre-image $\hat C$ of $C$ under $\pi$ is smooth when restricted to 
a neighbourhood $\hat W$ of $\pi^{-1}(0)$. Functions 
$\hat h\deff h\circ \pi$ and $\hat g\deff g\circ\pi$ are holomorphic on 
$\hat W$ and don't have common zeroes. Therefore $z\to [h(z):g(z)]$ is a holomorphic map from $\hat W$ to $\pp^1$. 

\smallskip The same reasoning  applies to a meromorphic map with values in
the complex projective space. Every meromorphic mapping $f$ from a manifold
$V$ to complex projective space writes as 
\[
f: z\in V \longmapsto (f_1,...,f_N),
\]
where $f_1,...,f_N$ are meromorphic functions. Shrinking $V$ we can write 
$f_1=h_1/g_1,...,f_N=h_N/g_N$. Resolving indeterminacies of 
every $f_j$ we obtain the holomorphicity of $\hat f\deff f\circ \hat \pi$
for an appropriate proper modification (after an appropriate shrinking). 
To see this set $\hat h_k\deff h_k\circ \pi$ and $\hat g_k\deff g_k\circ \pi$.
They are holomorphic without common zeroes. Let us see that 
$\hat f : z\to (\hat f_1,...,f_N)$ defines a holomorphic map from $\hat V$ to 
$\pp^N$. Indeed, given any point $z_0\in \hat V$ there exists at most one 
$k$ such that $\hat g_k(z_0) = 0$, and then $\hat h_k(z_0)\not = 0$. 
In homogeneous coordinates of $\pp^N$ the (meromorphic) mapping $\hat f$ 
writes as
\[
\hat f : z\to [1:\hat f_1:...:\hat f_k:...:\hat f_N] = [1/\hat f_k:\hat f_1/\hat f_k ...
:1:...:\hat f_N/\hat f_k] \quad\text{ for }\quad z\in \hat V. 
\]
Notice that the second representation is holomorphic in a neighbourhood of $z_0$,
because $1/\hat f_k = \hat g_k/\hat h_k$ has no poles near $z_0$. We conclude
the following:

\begin{prop}
\label{gr-mer-ext}
Let $f :V\to \pp^N$ be a meromorphic mapping. Then for a given relatively compact 
$W\comp V$ there exists a proper modification $\pi :\hat W\to W$ such that the 
lift $\hat f \deff f\circ \pi$ of $f$ to $\hat W$ is holomorphic.
\end{prop}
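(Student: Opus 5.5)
The cleanest route is Hironaka's flattening / resolution of indeterminacy applied to the graph of $f$, rather than the coordinate-by-coordinate argument sketched before the statement. That argument has a gap: one cannot resolve each $f_j$ separately and assume the resolutions are compatible, and nothing guarantees that at most one $\hat g_k$ vanishes at a point. So the plan is to make the informal reasoning rigorous by resolving the indeterminacy ideal of the whole map at once.

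First, since $\barr W$ is compact, cover it by finitely many coordinate balls $V_\alpha\comp V$. On each ball write $f=[F_0:\ldots:F_N]$ with $F_j$ holomorphic and without common divisor, which is possible because $\calo_{n,0}$ is a UFD; shrink the balls if necessary. The indeterminacy locus is $I(f)=\{F_0=\cdots=F_N=0\}$, which has codimension at least $2$. The ideal sheaf $\mathcal I$ generated by $F_0,\ldots,F_N$ does not depend on the choice of representatives, up to multiplication by a unit. Indeed, any two reduced representatives differ by a nowhere-vanishing holomorphic factor, so $\mathcal I$ is a globally defined coherent ideal sheaf on a neighbourhood of $\barr W$.

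Second, apply the principalization form of Theorem \ref{hironaka}. Its functorial version in \cite{BM} or \cite{Hi2} states that for a coherent ideal sheaf on a relatively compact open set there is a finite sequence of blow-ups with smooth centres contained in the cosupport, $\pi:\hat W\to W$, such that $\pi^*\mathcal I\cdot\calo_{\hat W}$ is an invertible sheaf, locally generated by a single function $\mu$. Relative compactness of $W$ in $V$ is exactly what makes the number of blow-ups finite. The centres lie over $I(f)$, which has codimension at least $2$. Hence $\pi$ is biholomorphic off $\pi^{-1}(I(f))$, and $\pi$ is a proper modification.

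Third, locally on $\hat W$ write $F_j\circ\pi=\mu\cdot\hat F_j$. Since the pulled-back ideal is generated by $\mu$, the functions $\hat F_0,\ldots,\hat F_N$ generate the unit ideal, and therefore they have no common zero. Then $\hat f=[\hat F_0:\ldots:\hat F_N]$ is a holomorphic map to $\pp^N$. It agrees with $f\circ\pi$ off $\pi^{-1}(I(f))$, and it patches across charts because $\mu$ is unique up to a unit.

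The main obstacle is the second step. Theorem \ref{hironaka} as quoted only concerns smoothing a subvariety, not principalizing an ideal. If one wants to stay with that statement, the plan is to apply it to the graph closure $\Gamma_f\subset W\times\pp^N$ and then take $\hat W$ to be a resolution of $\Gamma_f$, obtained by embedded resolution in $W\times\pp^N$. The projection to $\pp^N$ is then holomorphic, and the projection to $W$ is a proper modification. The only caveat is that this modification need not be a sequence of blow-ups of $W$ itself, which is enough for the proposition as stated.
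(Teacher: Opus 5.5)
Your proof is correct, but it takes a different route from the paper's. The paper argues component by component in the discussion preceding the statement:
\begin{itemize}
\item it writes $f=(h_1/g_1,\ldots,h_N/g_N)$;
\item it applies the embedded resolution of Theorem \ref{hironaka} to each hypersurface $\{h_jg_j=0\}$ and asserts that $h_j\circ\pi$, $g_j\circ\pi$ then have no common zeros;
\item it assembles the components by claiming that at each point at most one $\hat g_k$ vanishes.
\end{itemize}
You instead principalize the base ideal $(F_0,\ldots,F_N)$ of the whole map at once, and your step $F_j\circ\pi=\mu\hat F_j$ with $(\hat F_0,\ldots,\hat F_N)=(1)$ is exactly what is needed.

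Your objection to the componentwise sketch is justified, and in fact both of its steps fail as written. For $f=[z_1z_2:z_2:z_1]=[1:1/z_1:1/z_2]$, each component is already a holomorphic map to $\pp^1$, so nothing is blown up. Yet $f$ is indeterminate at $0$, and $g_1=z_1$, $g_2=z_2$ vanish there simultaneously. Even for a single function $f=z_2^2/z_1$, one blow-up of the origin already makes the strict transform of $\{hg=0\}$ smooth, because its two branches become disjoint. So this is an admissible output of Theorem \ref{hironaka}. But in the chart $z_1=uv$, $z_2=v$ one gets $h\circ\pi=v^2$ and $g\circ\pi=uv$, which share the zero set $\{v=0\}$, and $\hat f=v/u$ is still indeterminate. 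Smoothing $\{hg=0\}$ is not the same as principalizing $(h,g)$.

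As for what each route buys: the paper's route, if it worked, would use only the embedded-resolution statement it quotes. Your main argument needs the principalization theorem, which is standard (Hironaka, \cite{BM}) but strictly stronger than Theorem \ref{hironaka} as quoted. Your fallback through the graph $\Gamma_f$ needs only resolution of a subvariety. Two remarks on it:
\begin{itemize}
\item Theorem \ref{hironaka} as quoted is local over a single point $s_0$, so you should invoke the global version on a neighbourhood of $\barr W$.
\item The price is that $\hat W$ is no longer a tower of blow-ups of $W$. This is harmless for the later use in Lemma \ref{surj-lem4-bis}, which needs only properness, biholomorphicity off the indeterminacy set, and Grauert's direct image theorem.
\end{itemize}
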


\begin{rema} \rm 
\label{zero-ind}
We conclude our discussion of meromorphic mappings with two more
observations. 
\begin{itemize}
\item Grassmanian $Gr(r,n)$ is a projective variety, \ie it admits a 
holomorphic embedding to $\pp^N$, see \cite{GH}. And therefore  the 
conclusion of the Proposition \ref{gr-mer-ext} holds true for 
meromorphic mappings with values in $Gr(r,n)$.

\smallskip\item  Given a meromorphic
map in a neighbourhood of zero with values in $\pp^N$  in the form 
\[
f(z) = [\phi_0(z):\phi_1(z):...:\phi_N(z)],
\]
where all $\phi_j$ vanish at zero. If they have a common divisor in 
$_n\calo_0$ then we can divide them by this divisor without changing 
the map. If at least one of $\phi_j$ doesn't vanish at zero then 
$f$ is holomorphic. Therefore we are left with the following case:
 all $\phi_j$ vanish at zero but they have no common divisor in 
$_n\calo_0$. Then zero is a point if indeterminacy of $f$.
Indeed, suppose that $f$ is holomorphic in a neighbourhood $V$ of zero. 
Shrinking $V$, if necessary we can assume that $f|_V$ takes values in
some affine chart $U_j = \{[z]\in \pp^N:z_j\not=0\}$ of $\pp^N$.
But this means that $\phi_j|V\not=0$, contradiction.
\end{itemize}
\end{rema}



\newprg[EXT.sub-sh-bis]{Proof of Theorem \ref{thul-posit}, local case}

This proof  will be carried out in several steps. 
For $\eps \in (0, 1)$ set $G^n_\eps = \left(\Delta^{n-1}\times \check\Delta  \right)\cup \left(B^{n-1}(\eps)\times \Delta\right)$, where $B^{n-1}(\eps)$ 
is the ball of radius $\eps$ in $\cc^{n-1}$ centred at zero.

\begin{figure}[h]
\centering
\scalebox{.5}{\includegraphics[width=4.0in]{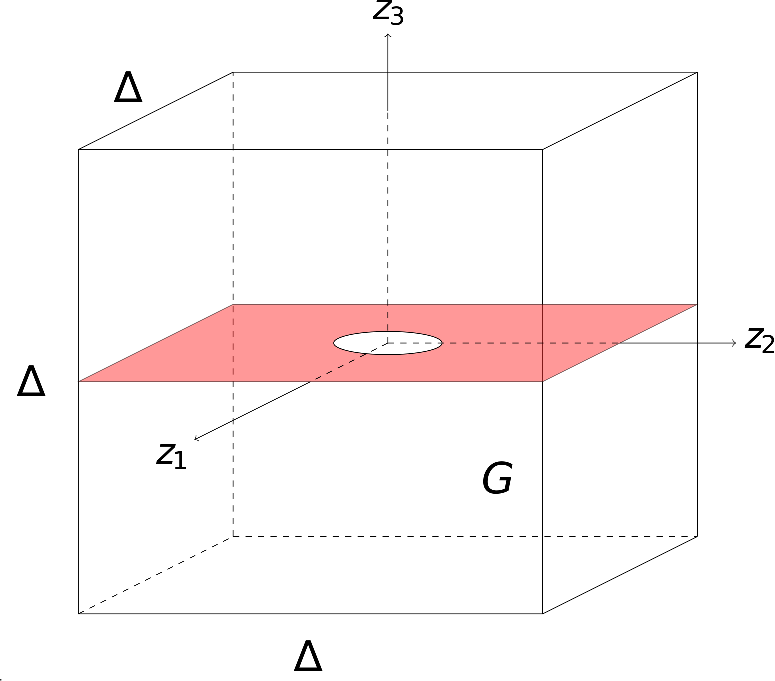}}
\caption{Drawing of $G = G^n_\eps$ in dimension $3$.}
\end{figure}


Let $(E, h)$ be a Hermitian vector bundle of rank $r$ over $G^n_{\eps}$ with Nakano-semipositive curvature. The aim of this section is to extend $E$ to $\Delta^n$ as a reflexive coherent analytic sheaf. If $n \geq 3$, then for every 
unit vector $\vect\in \cc^{n-1}$ fix some orthonormal coordinate
system $z_1,...,z_{n-1}$ in which $\vect = (1,0,...,0)$. Denote 
by $P^n_\eps (\vect)$ the open subset of $\cc^n$ which is 
equal to $\left(\Delta_n \times \Delta^{n-2}_{\eps} \times \check\Delta\right) 
\cup \left(\Delta^{n-1}_{\rho} \times \Delta\right)$ in this new coordinate system, 
where $\rho = \frac{\eps}{\sqrt{n-1}}$. Notice that the envelope of holomorphy of $P^n_{\eps}(\vect)$ in the same system of coordinates is

\[
\widehat{P}^n_\eps(\vect)\deff \Delta_n \times \Delta^{n-2}_{\eps} \times \Delta.
\]

\smallskip

\begin{itemize}
\item For $n\ge 3$ consider the following $n$-dimensional Thullen figure:
\begin{equation}
\eqqno(thullen-n-bis)
G^n_\eps(\vect) = \Delta^n \cap P^n_{\eps}(\vect).
\end{equation}
\item If $n=2$ we keep the notation
\begin{equation}
\eqqno(thullen-2-bis)
G^2_\eps \deff \Delta\times\check\Delta \cup \Delta_\eps\times \Delta .
\end{equation}
\end{itemize}

\smallskip Notice that the envelopes of holomorphy 
of these  domains are:

\smallskip 
\begin{itemize}
\item for $n=2$ 
\[
\widehat{G}^2_\eps = \Delta^2,
\]

\item for $n\ge 3$
\[
\widehat{G}^n_\eps(\vect) \deff \Delta^n \cap \widehat{P}^n_\eps(\vect).
\]
\end{itemize}

\begin{figure}[h]
\centering
\includegraphics[width=5.5in]{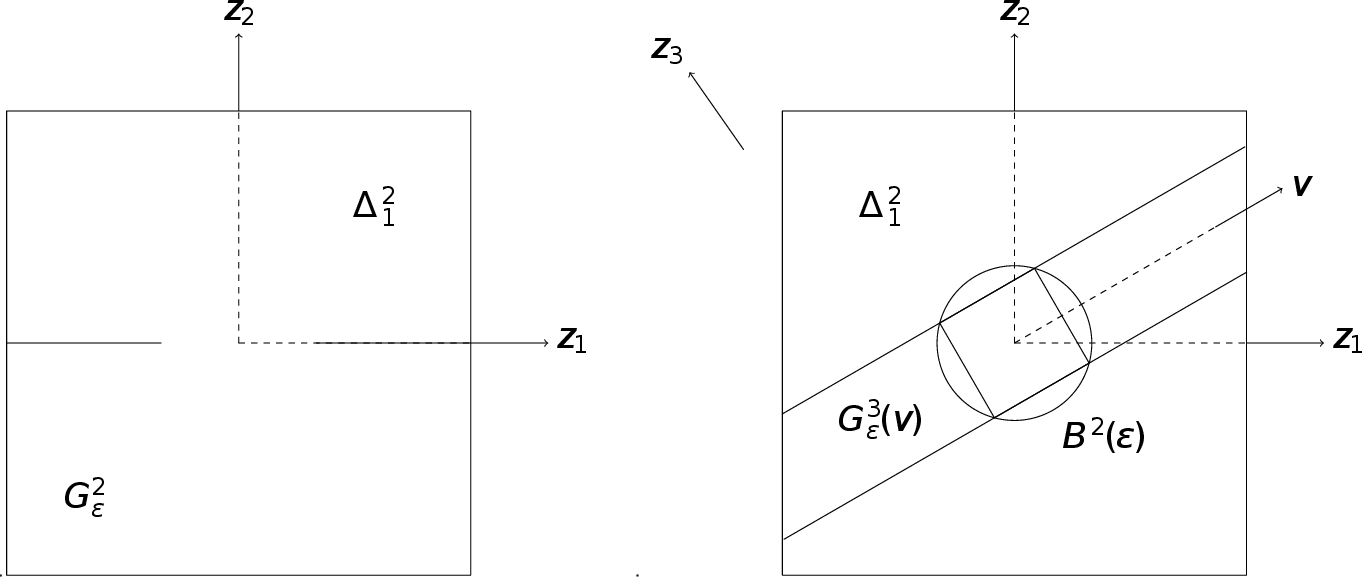}
\caption{On the left, $G^2_{\eps}$ is drawn. On the right, $G^3_{\eps}(\vect)$ is drawn for an arbitrary $\vect$.}
\end{figure}

When speaking about {\slsf shrinked} Thullen domains, or polydisks, 
we mean that disk $\Delta$ in their definition is replaced by a smaller
disk $\Delta_{1-\delta}$ (as well as $\Delta_\eps$ replaced by 
$\Delta_{\eps -\delta}$, and the ball $B^{n-1}(1)$ by a smaller ball 
$B^{n-1}(1-\delta)$. Parameter $\delta >0$, small enough, is appropriately 
chosen. Shrinked $G^n_\eps $, $\Delta^n$ and so on will be still denoted 
as $G^n_\eps$, $\Delta^n$ and so on if no confusion could occur. By $Y$ 
we denote the hyperplane $\{z_n=0\}$ in $\Delta^n$.

\smallskip\noindent{\slsf Step 1. We state this step in the form of a lemma.} 
In what follows, $\vect$ is a fixed unit vector of $\cc^{n-1}$.

\begin{lem}
\label{surj-map1-bis}
Let $E$ be a Nakano semi-positive bundle over $G^n_\eps(\vect)$ 
of rank $r$. Then for an appropriately shrinked $G^n_\eps(\vect)$ there
exists a positive integer $N$ and a holomorphic morphism of bundles  
\begin{equation}
\eqqno(hom-bundl1-bis)
\Phi :  G^n_\eps(\vect) \times \cc^{Nr} \to E
\end{equation}
with the following properties:

\smallskip\sli $\Phi_p: \{p\}\times \cc^{Nr} \to E_p$ is surjective for every 
point $p\in G^n_\eps(\vect)\setminus D$, where $D$ is 
an analytic set in $G^n_\eps(\vect)$ of codimension $\ge 2$;

\smallskip\slii for any holomorphic section $s$ of $G^n_\eps(\vect) \times \cc^{Nr} $ 
its image $\Phi (s)$ belongs to $L^2(G^n_\eps(\vect) , E)$. 
\end{lem}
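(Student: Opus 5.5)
We will build $\Phi$ from finitely many $L^2$ holomorphic sections produced by Theorem \ref{L2-section}. Twisting by $\Lambda^{n,0}$ is harmless here, because on (subsets of) $\Delta^n$ the canonical bundle is trivialized by $dz_1\land\ldots\land dz_n$. So sections of $E\otimes\Lambda^{n,0}$ which are $L^2$ are the same as $L^2$ sections of $E$, with respect to the Euclidean metric.

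\textbf{Setting up Theorem \ref{L2-section}.} We apply the theorem with the following data:
\begin{itemize}
\item $X=\widehat G^n_\eps(\vect)$ (Stein, being an envelope of holomorphy and an intersection of polydisc-like convex domains), with the Euclidean K\"ahler form;
\item $Y=\{z_n=0\}$ and $G=G^n_\eps(\vect)$;
\item $V$ equal to the shrinked $\widehat G^n_\eps(\vect)$, which is relatively compact and Stein.
\end{itemize}

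The theorem requires, for each point $p\in V\cap G$, a holomorphic function $f$ on $X$ with $f(p)=0$ and $f\neq0$ on $Y\setminus G$. Since $Y\setminus G$ lies in $\{z_n=0\}$, the natural choice is a linear function in the variables $z'=(z_1,\ldots,z_{n-1})$, namely $f(z)=\ell(z'-p')$. We need $\ell(z'-p')\ne0$ for all $z'$ in the compact set $K=\{z': (z',0)\notin G\}$.

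\textbf{Choice of $f$ and the exceptional set.} Consider points $p$ with $|p_n|$ close to the edge, or more generally with $p'$ outside $K$, where $K=\overline{\Delta^{n-1}\cap(\ldots)}\setminus$(the part of $\Delta_1\times\Delta_\eps^{n-2}$, $\Delta_\rho^{n-1}$ in $G$). For such $p$ we want a complex hyperplane through $p'$ in $\cc^{n-1}$ that avoids $K$. Here is the concrete plan:
\begin{itemize}
\item Use $f=z_1-p_1$ when $|p_1|$ is large, since $K$ projects to the $z_1$-axis inside a disc. Note that the structure of $P^n_\eps(\vect)$ is exactly adapted to the direction $\vect$: $K\subset\{|z_1|<1, \ldots\}$ cut out by the condition that some $|z_j|\ge\eps$, $j\ge2$, or $|z'|\ge\rho$.
\item If this fails, use $f=z_j-p_j$ for a suitable $j$.
\item Points $p$ for which no such linear $f$ exists are handled by density and openness instead.
\end{itemize}

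\textbf{Producing the sections.} For each admissible $p$ we get $r$ sections $s_1^p,\ldots,s_r^p\in L^2(V\cap G,E)$ spanning $E_p$. Let $D_k$ be the set where $k$ chosen sections fail to span $E$. Each $D_k$ is an analytic subset of $V\cap G$, namely the zero set of all $r\times r$ minors. We proceed as follows:
\begin{itemize}
\item Start with one admissible point $p_1$.
\item If the degeneracy set has a component of codimension one, pick an admissible $p$ on that component and add the $r$ sections of $p$. This strictly decreases the number of hypersurface components meeting a slightly smaller compact set, by Noetherianity on compacts.
\item Shrink $\delta$ once more so that only finitely many components occur.
\end{itemize}
After $N$ steps the map $\Phi(z,c)=\sum_{k,i}c_{ki}s^{p_k}_i(z)$ is surjective off an analytic set $D$ of codimension at least $2$.

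Property ii) is then immediate. A holomorphic section of the trivial bundle on the shrinked domain has bounded coefficients on the further-shrinked domain, and the $s_i^{p_k}$ are $L^2$, so $\Phi(s)$ is $L^2$. (To make this rigorous, shrink twice: first for the domain $V$ on which the sections are $L^2$, then for boundedness of the holomorphic coefficients.)

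\textbf{Main obstacle.} The key point is to guarantee that every hypersurface component of the degeneracy locus contains an admissible point, i.e.\ a point $p$ admitting a function $f$ as in Theorem \ref{L2-section}. Admissible points form a nonempty open set, namely those $p$ whose $z'$-coordinates give some coordinate hyperplane $\{z_j=p_j\}$ missing $K$. A hypersurface component $H$ of $V\cap G$ cannot lie entirely in the non-admissible region unless that region contains a hypersurface.

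I would try to show that the non-admissible set in $z'$-space is contained in a compact set with nonempty complement in every slice. Any hypersurface of $\Delta^n\cap G$, closed in $G$, must then reach the admissible region. The hypersurface either meets $\{z_n\neq0\}$, where it is unrestricted in $z'$, or is $\{z_n=c\}$, which clearly has admissible points. This is where the choice of $\rho=\eps/\sqrt{n-1}$ and the shape of $P^n_\eps(\vect)$ should enter, and it is the step I expect to require the most care.
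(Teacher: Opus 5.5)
There is a genuine gap, and it is in the choice of $f$, which is the only non-formal step of this lemma. You only allow $f$ depending on $z'=(z_1,\dots,z_{n-1})$. But if $f$ depends only on $z'$ and $f(p)=0$, then also $f(p',0)=0$. Since $Y\cap G^n_\eps(\vect)=\Delta^{n-1}_\rho\times\{0\}$, the point $(p',0)$ lies in $Y\setminus G^n_\eps(\vect)$ as soon as $p'\notin\Delta^{n-1}_\rho$. So every point of $G^n_\eps(\vect)$ lying over some $z'\notin\Delta^{n-1}_\rho$ is inadmissible in your scheme, whatever $p_n$ is, and this is most of $G^n_\eps(\vect)\setminus Y$. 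In particular, $f=z_1-p_1$ with $|p_1|\ge\rho$ always vanishes at $(p_1,0,\dots,0)\in Y\setminus G^n_\eps(\vect)$, so ``large $|p_1|$'' is exactly the wrong regime.

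This also breaks your argument for the main obstacle. For $\rho<|c|<1$, the set $\{z_1=c\}\cap G^n_\eps(\vect)$ is a closed hypersurface of $G^n_\eps(\vect)$ lying in $\{z_n\neq0\}$ and containing no admissible point. Nothing prevents it from being a component of the degeneracy locus, and density or openness cannot give generation at points of such a component. The missing idea is the trivial one used in the paper: for $p_n\neq0$ take $f(z)=z_n-p_n$, whose zero set $\{z_n=p_n\}$ does not meet $Y$ at all. Then every point of $G^n_\eps(\vect)\setminus Y$ is admissible. Hence every hypersurface component of the degeneracy set other than $Y\cap G^n_\eps(\vect)$ contains admissible points, and neither $\rho$ nor the shape of $P^n_\eps(\vect)$ plays any role there.

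The remaining component $Y\cap G^n_\eps(\vect)$ (your case $\{z_n=c\}$ with $c=0$) is not ``clearly'' fine. One needs $f$ vanishing at some $(p',0)$ with $p'\in\Delta^{n-1}_\rho$ but nowhere on $Y\setminus G^n_\eps(\vect)$. The paper uses $f=z_1-z_1^0$, which works whenever $Y\setminus G^n_\eps(\vect)\subset\{|z_1|\ge\rho\}$, e.g.\ for $n=2$. Be aware, though, that with the transverse factor $\Delta^{n-2}_\eps$ as printed and $n\ge3$, one has $\rho<\eps$. Then $\{z_1=z_1^0\}$ meets $Y\setminus G^n_\eps(\vect)$ at $(z_1^0,z_2,0,\dots,0)$ with $\rho\le|z_2|<\eps$. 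In fact, in that case no holomorphic $f$ on $\widehat G^n_\eps(\vect)$ works. The zero set of $f|_Y$ would be an analytic subset of $Y\cap\widehat G^n_\eps(\vect)$ of dimension $n-2\ge1$ contained in the relatively compact set $\Delta^{n-1}_\rho\times\{0\}$. It would therefore be a compact analytic set of positive dimension in a domain of $\cc^{n-1}$, which is impossible. So this case requires the transverse factor to be $\Delta^{n-2}_\rho$ (which gives $Y\setminus G^n_\eps(\vect)\subset\{|z_1|\ge\rho\}$), or a separate argument.

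There are also two smaller omissions. The finiteness of the hypersurface components after shrinking relies on extension to $\widehat G^n_\eps(\vect)$: Remmert--Stein--Thullen for the closure of the codimension-one part of the degeneracy set. The boundedness of the coefficients in (ii) relies on extending holomorphic functions from $G^n_\eps(\vect)$ to $\widehat G^n_\eps(\vect)$. Both are needed because the shrinked $G^n_\eps(\vect)$ is not relatively compact in $G^n_\eps(\vect)$, so ``Noetherianity on compacts'' alone does not give either.
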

\proof Here $\Phi_p$ stands for the restriction of $\Phi$ to the 
fibre $\cc^{Nr}_p\deff\{p\}\times \cc^{Nr}$ of the trivial bundle 
$G^n_\eps(\vect) \times \cc^{Nr}$ over $p$ and maps $\cc^{Nr}_p$ to $E_p$. 
Take some $p \in G^n_\eps(\vect) $. Notice that we can find
a holomorphic (in fact affine) function $f$ vanishing at $p$
and not vanishing on $Y\setminus G^n_\eps(\vect)$. For $p=(z_1^0,...,z_{n-1}^0,0)
\in Y\cap G^n_\eps(\vect)$ one can set $f(z) = z_1-z_1^0$, for $p\not\in Y$
one can set $f(z) = z_n-z_n^0$ since $z_n^0\not=0$ for such $p$. Using Theorem 
\ref{L2-section} we can find $L^2$ holomorphic sections $\bfs_1, \dots, 
\bfs_r \in  L^2(G^n_\eps(\vect) , E)$, which generate the fibre of 
$E$ at $p$ for a fixed in advance shrinked $G^n_\eps(\vect)$.  
Consider the following morphism of bundles 
\[
\begin{array}{ll}
\Phi_1 : &\calo(G^n_\eps(\vect))^r \,\, \longrightarrow \,\, \calo(G^n_\eps(\vect), E) 
\\[5pt]
&(f_1, \dots, f_r) \longmapsto \sum_{i=1}^r f_i \bfs_i.
\end{array}
\]
If the sections $\bfs_1,...,\bfs_r$ generate every fibre $E_z$ of $E$ for
$z\in G^n_\eps(\vect)$ 
then we can take $\Phi = \Phi_1$ as our map. Both items (\sli and (\slii will be satisfied due to the choice of $\bfs_1,...,\bfs_r$ and the fact that they are 
in $L^2(G^n_\eps(\vect) ,E)$. Here we use also the fact that holomorphic 
functions extend from $G^n_\eps (\vect)$ to $\widehat{G}^n_\eps(\vect) \deff 
\Delta^n \cap \widehat{P}^n_\eps(\vect)$. Otherwise, consider the following set
\[
\widetilde{C} = \left\{ z \in G^n_\eps(\vect) \mid \bfs_1(z), \dots, \bfs_r(z) ~  
\text{do not generate} ~ E_z\right\}.
\]

This is an analytic set in $G^n_\eps(\vect)$. Let $C$ be the union of all components 
of $\widetilde{C}$ of pure 
codimension $1$. By Remmert-Thullen-Stein theorem, see Theorem 2.6 in 
\cite{Si2}, the closure of $\bar C$ of $C$ is a hypersurface in 
$\widehat{G}^n_\eps$.
Shrinking $G^n_\eps(\vect)$, if necessary, we may assume that 
$\bar C$ has a finite number of irreducible components, say $C_1, \dots, 
C_k$. For each $0 \leq j \leq k$ fix a point $p_j \in C_j$. Then, using 
the same arguments as before, we can find holomorphic sections $\bfs_1^{(j)}, 
\dots, \bfs_r^{(j)} \in L^2 (G^n_\eps(\vect), E)$ that generate the fibre $E_{p_j}$. 
Consider the mappings
\[
\begin{array}{ll}
\Phi_{j+1} : &\calo(G^n_\eps(\vect))^r \longrightarrow \calo(G^n_\eps(\vect), E) 
\\[5pt]
&(f_1, \dots, f_r) \longmapsto \sum_{i=1}^r f_i \bfs_i^{(j)}
\end{array}
\]
and set 
\[
\Phi = \Phi_1 \oplus \dots \oplus \Phi_{k+1}.
\]
$\Phi$ can be non-surjective at most over a codimension $\ge 2$ set of 
points $D$ in $G^n_\eps(\vect)$. And notice that all sections 
$\Phi (\bfs)\in \calo (G^n_\eps(\vect), E)$  for $\bfs\in \calo (G^n_\eps(\vect), 
\calo^r)$ are $L^2$-integrable in a shrinked $G^n_\eps(\vect)$ as required.

\smallskip\qed

\smallskip\noindent{\slsf Step 2. Improving the surjectivity of $\Phi$. }
We assume that $G^n_\eps(\vect)$ and therefore $\widehat{G}^n_\eps(\vect)$ are 
appro\-priately shrinked in such a way that the conclusion of Lemma 
\ref{surj-map1-bis} holds. Consider the holomorphic mapping 
\begin{equation}
\eqqno(ker-map-bis)
\begin{array}{ll}
K : &G^n_\eps(\vect) \setminus D\longrightarrow Gr((N-1)r, Nr) \\[4.5pt]
&z \longmapsto \ker ~\Phi_z \subset \cc^{Nr}.
\end{array}
\end{equation}
Since the Grassmanian $Gr((N-1)r, Nr) $ is a projective 
variety holomorphic  mappings with values in $Gr((N-1)r,Nr)$ extend
meromorphically across analytic sets of codimension $\ge 2$. Therefore
$K$ extends meromorphically to the whole of $G^n_\eps(\vect) $. After that we
extend $K$ to $\widehat{G}^n_\eps(\vect) $ by the Thullen extension theorem 
for meromorphic functions. Denote this extension still as $K$ and by 
$A$ denote the set of its indeterminacy points. This $A$ is an analytic 
set in $\widehat{G}^n_\eps(\vect) $ of codimension $\ge 2$.

\begin{lem}
\label{surj-map2-bis}
Mapping $\Phi_p: \cc^{Nr}_p \to E_p$ is surjective for every 
point $p\in G^n_\eps(\vect) \setminus A$.
\end{lem}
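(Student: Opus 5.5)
The plan is to pass from $\Phi$ to a morphism between two holomorphic bundles of the same rank $r$ near $p$. That morphism is a square matrix whose determinant is a single holomorphic function, and its zero set is either empty or a hypersurface. Since the bad set $D$ of Lemma \ref{surj-map1-bis} has codimension $\ge2$, the hypersurface case is ruled out.

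\emph{Setup.} Fix $p\in G^n_\eps(\vect)\setminus A$. If $p\notin D$ there is nothing to prove, so assume $p\in D$. Since $p\notin A$, the extended map $K$ is holomorphic on a small connected neighbourhood $W\subset G^n_\eps(\vect)\setminus A$ of $p$. Pull back by $K$ the tautological subbundle of $Gr((N-1)r,Nr)$. This gives a holomorphic subbundle $\mathcal K\subset W\times\cc^{Nr}$ of rank $(N-1)r$, with $\mathcal K_z=\ker\Phi_z$ for $z\in W\setminus D$.

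\emph{Step 1: $\Phi$ kills $\mathcal K$ on all of $W$.} Take a local holomorphic frame $k_1,\dots,k_{(N-1)r}$ of $\mathcal K$ near $p$. Each $\Phi(k_j)$ is a holomorphic section of $E$ that vanishes on $W\setminus D$. Because $D$ has codimension $\ge2$, $W\setminus D$ is dense in $W$, so $\Phi(k_j)\equiv0$ by continuity. Hence $\Phi$ descends to a holomorphic bundle morphism
\[
\bar\Phi:\ Q:=(W\times\cc^{Nr})/\mathcal K\longrightarrow E|_W ,
\]
and both bundles have rank $r$. Moreover, $\Phi_z$ is surjective if and only if $\bar\Phi_z$ is an isomorphism, since the image of $\Phi_z$ equals the image of $\bar\Phi_z$.

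\emph{Step 2: the determinant argument.} Shrink $W$ so that $Q$ and $E|_W$ are trivial, and write $\bar\Phi$ as an $r\times r$ matrix of holomorphic functions. Put $\delta:=\det\bar\Phi\in\calo(W)$. By Lemma \ref{surj-map1-bis}, $\Phi_z$ is surjective for $z\in W\setminus D$, so $\delta\neq0$ there. Thus $\{\delta=0\}\subset D\cap W$. The zero set of a holomorphic function that is not identically zero is either empty or of pure codimension $1$. It cannot be contained in a set of codimension $\ge2$ unless it is empty. Also $\delta\not\equiv0$, because $W\setminus D\neq\emptyset$. Hence $\delta(p)\neq0$, so $\bar\Phi_p$ is an isomorphism and $\Phi_p$ is surjective.

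\emph{Main obstacle.} The delicate point is Step 1. One must be sure that $K$, which was extended meromorphically across $D$, really defines a holomorphic subbundle near $p$ whose fibres lie in $\ker\Phi$, including at points of $D$. This holds because holomorphy of $K$ at $p$ (that is, $p\notin A$) gives a genuine holomorphic frame, and the vanishing of $\Phi$ on that frame follows from the identity theorem. After that, Step 2 is a routine codimension count.
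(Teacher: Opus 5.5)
Your proof is correct, but it runs by a different mechanism from the paper's.

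\textbf{The paper's route.} It argues by contradiction, working with the rows of $\Phi$. It passes to the annihilator map $K^{(0)}$ and writes $P\circ K^{(0)}$ in Pl\"ucker coordinates as the $r\times r$ minors $\Delta_I$ of the matrix of $\Phi$. At a point $p\in D$ all these minors vanish. Because $D$ has codimension $\ge 2$, they have no non-invertible common factor. The second item of Remark \ref{zero-ind} then makes $p$ an indeterminacy point, contradicting $p\notin A$.

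\textbf{Your route.} You work directly with the kernel. Holomorphy of $K$ at $p$ produces the subbundle $\mathcal K$, and density forces $\Phi$ to vanish on it. The induced morphism $\bar\Phi\colon Q\to E|_W$ between rank-$r$ bundles reduces everything to the single function $\delta=\det\bar\Phi$. Its zero set is either empty or a hypersurface contained in $D$, so it is empty.

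\textbf{How the two are linked.} Let $\psi_I$ be holomorphic Pl\"ucker coordinates of $K^{(0)}$ with no common zero near $p$. Then $\Delta_I=\delta\cdot\psi_I$ up to a unit, so your $\delta$ is exactly the common factor whose existence the paper rules out.

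\textbf{Comparison.} Your argument is direct rather than by contradiction, and more self-contained. It needs neither the Pl\"ucker embedding nor the divisibility argument in ${}_n\calo_p$ that underlies Remark \ref{zero-ind}, which the paper only sketches. All it uses is that a hypersurface cannot lie in a set of codimension $\ge 2$. The paper's argument instead stays within the language of minors and points of indeterminacy, which it also uses for the meromorphic extension and resolution of $K$.
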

\proof  Still denoting by $D$ the set of points $p\in G^n_\eps(\vect)$ 
such that $\Phi_p$ is not surjective let us prove that $D\subset A\cap G^n_\eps(\vect)$.
If not take a point $p\in D\setminus A$. For some 
neighbourhood $W\ni p$ mapping $\Phi$ is surjective over $W\setminus 
D$, \ie $\Phi_z:\cc^{Nr}_z\to E_z$ has rank $r$ for all 
$z\in W\setminus D$, and, moreover, $K$ is holomorphic in $W$. 
Let $e_1,...,e_{Nr}$ be the standard basis of $\cc^{Nr}$, defining
naturally the frame of $\calo (G^n_\eps(\vect))^{Nr}$. Denote this frame 
as $\bfe_1,..., \bfe_{Nr}$. Fix some frame $\bftheta_1,...,\bftheta_r$ of $E|_W$. 
In these frames mapping $\Phi$ writes naturally as a matrix valued function 
\[
\Phi (z)\left(\{f_i\}_{i=1}^{Nr}\right) = 
\]

\begin{equation}
\eqqno(matrix-phi-bis)
= \left(
\begin{matrix}
\sigma_{11} &\sigma_{12} & ...& \sigma_{1Nr}\cr
\sigma_{21} &\sigma_{22} & ...& \sigma_{2Nr} \cr 
. & . & ... & .\cr
\sigma_{r1} &\sigma_{r2} & ...& \sigma_{rNr}
\end{matrix}
\right)
\left(
\begin{matrix}
f_1 \cr
f_2 \cr
..\cr 
f_{Nr}
\end{matrix}
\right)
=
\left(
\begin{matrix}
\sigma_{11}f_1 + \sigma_{12}f_2 + ... + \sigma_{1Nr}f_{Nr}\cr
\sigma_{21}f_1 + \sigma_{22}f_2 + ... + \sigma_{2Nr}f_{Nr} \cr
................................\cr
\sigma_{r1}f_1 + \sigma_{r2}f_2+  ... + \sigma_{rNr}f_{Nr}
\end{matrix}
\right),
\end{equation}
where $\Phi_z(\bfe_k) = \sigma_{1k}\bftheta_1 + \sigma_{2k}\bftheta_2 + ...+ 
\sigma_{rk}\bftheta_r$, \ie the columns of this matrix are formed from the 
coefficients of sections $\bfs_i$, $i=1,...,Nr$, in the frame $\bftheta_1,...,
\bftheta_r$, this is because $\Phi(\bfe_k) = \bfs_k$.
We denote the matrix of $\Phi_z$ as $\Phi (z)$. Its rows 
are vectors in $\cc_z^{Nr}$ generating the annihilator
$\ker\Phi_z^{(0)}$ of $\ker \Phi_z$ in $\cc_z^{Nr}$. Mapping 
\[
K^{(0)}:z\longrightarrow \ker \Phi_z^{(0)} \in Gr (r,Nr)
\]
is well defined for all $z\in G^n_\eps(\vect)$ for which $K(z)$ is 
well defined. Moreover they both are holomorphic on the same open subset 
of $G^n_\eps(\vect)$ where they are well defined. Let us prove the meromorphic 
extendability of $K^{(0)}$ to the whole of $\widehat{G}^n_\eps(\vect)$.

\smallskip Denote by $C_r^{Nr}$ the binomial coefficient and consider the 
Pl\"ucker embedding, see \cite{GH}: 
\begin{equation}
\eqqno(plucker-bis)
P :  Gr (r,Nr) \longrightarrow \pp^{C_r^{Nr}-1}.
\end{equation}
Composition $P\circ K^{(0)}$ is a holomorphic map of $G^n_\eps(\vect)\setminus D$
to $\pp^{C_r^{Nr}-1}$. In local frames as above our composition writes as 
\begin{equation}
\eqqno(plucker-1-bis)
P\circ K^{(0)}: z \longrightarrow [\Delta_{1...,r}:...:
\Delta_{(N-1)r+1...Nr}],
\end{equation}
where $\Delta_{i_1...i_r}$ are minors of the matrix $\Phi_z$ of size $r$.
Another description of this mapping for $z\in B^n_\eps\setminus A$ is 
\begin{equation}
\eqqno(plucker-2-bis)
P\circ K^{(0)}: z \longmapsto v_1(z)\wedge ...\wedge v_r(z),
\end{equation}
 where $v_1,...,v_r$ span 
$\ker\Phi (z)^{(0)}$. For $z\in W\setminus A$ as these
$v_1,...,v_r$ one can take the rows of the matrix $\Phi (z)$, and this is 
the same as \eqqref(plucker-1-bis).  For a given point $z\in W$ such that not 
all minors vanish at $z$ the composition $P\circ K^{(0)}$is holomorphic 
in a neighborhood of such $z$,  which gives the holomorphicity
of $F^{(0)}$ in this neighborhood. 

\smallskip If $p\in D$, \ie $\Phi_p$ is not surjective, our minors vanish
at $p$. Since $\Phi_z$ is surjective for $z$ outside of codimension $\ge 2$ 
they cannot vanish along a germ of a hypersurface through $p$. 
Denote by $PL(r, Nr)$ the image of $Gr(r,Hr)$ under the Pl\"ucker 
embedding \eqqref(plucker-bis). This is a smooth projective subvariety of 
$\pp^{C_r^{Nr}-1}$. The fact that $p\in D\setminus A$ means that 
$P\circ K^{(0)}$ is holomorphic near $p$ as a mapping to the projective space,
but the rows of the matrix $\Phi (p)$ are linearly dependant at $p$.
I.e., all minors in representation \eqqref(plucker-1-bis) vanish at $p$.
But a meromorphic mapping $f:\Delta^n\to \pp^M$, $M = C_r^{Nr}$ 
in our case, given in the form 
\[
f(z) = [\phi_0(z):...:\phi_M(z)]
\]
($\phi_j$ are minors in our case) with $\phi_j$ vanishing at zero but 
having no common non-invertible 
factor as germs in $_n\calo_0$, cannot be holomorphic near the origin,
see the second item in the Remark \ref{zero-ind}.
Applying this to $f=P\circ K^{(0)}$ we get a contradiction.


\smallskip\qed

\smallskip 
Let us further improve the surjectivity of $\Phi$.
\begin{lem}
\label{surj-map3-bis}
Morphism of bundles $\Phi : G^n_\eps(\vect) \times \cc^{Nr} \to E$ can be 
modified in such a way that, after shrinking, $\Phi_p : \cc^{Nr}_p \to E_p$
will be surjective for every point $p\in G^n_\eps(\vect) $.
\end{lem}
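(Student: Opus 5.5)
The non-surjectivity locus is already confined to the indeterminacy set of $K$, so I would simply add more sections at the finitely many bad points that survive shrinking. By Lemma \ref{surj-map2-bis}, the set $D$ of points $p\in G^n_\eps(\vect)$ where $\Phi_p$ fails to be surjective satisfies $D\subset A\cap G^n_\eps(\vect)$, with $A$ an analytic subset of $\widehat G^n_\eps(\vect)$ of codimension $\ge 2$. Moreover, $D$ is analytic in $G^n_\eps(\vect)$: it is the common zero set of the $r\times r$ minors $\Delta_{i_1\dots i_r}$ of the matrix $\Phi(z)$ in \eqqref(matrix-phi-bis). The plan is to kill $D$ by finitely many further applications of Theorem \ref{L2-section}, the same way Lemma \ref{surj-map1-bis} killed the codimension-one components of $\widetilde C$.

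First I would shrink $G^n_\eps(\vect)$ (hence $\widehat G^n_\eps(\vect)$) so that $D$ has finitely many irreducible components $D_1,\dots,D_m$; this is possible since $D$ is analytic and, after shrinking, relatively compact in the original domain. In each $D_j$ I fix a point $q_j$. The sets $Y\setminus G^n_\eps(\vect)$ are unchanged, so the affine functions from the proof of Lemma \ref{surj-map1-bis} ($z_1-z_1^0$ or $z_n-z_n^0$) still vanish at $q_j$ and not on $Y\setminus G^n_\eps(\vect)$. Theorem \ref{L2-section} then yields $L^2$ holomorphic sections $\bfs^{(j)}_1,\dots,\bfs^{(j)}_r$ generating $E_{q_j}$. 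I set $\Phi'=\Phi\oplus\Psi_1\oplus\dots\oplus\Psi_m$, where $\Psi_j(f_1,\dots,f_r)=\sum f_i\bfs^{(j)}_i$. Property \slii of Lemma \ref{surj-map1-bis} persists for $\Phi'$, using again that holomorphic functions extend to $\widehat G^n_\eps(\vect)$.

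The new bad set $D'$ of $\Phi'$ satisfies $D'\subset D$, and $D'\cap D_j$ is a proper analytic subset of $D_j$ because it misses $q_j$. Hence $\dim D'<\dim D$. Iterating at most $n-1$ times (dimension drops at each step, with a further shrinking each time) gives $D=\emptyset$. This uses finitely many shrinkings by a fixed small amount $\delta$, so the final domain is still a shrinked Thullen figure and $N$ stays finite.

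The main obstacle is the shrinking step. One must guarantee that $D$, which lives only in $G^n_\eps(\vect)$ and not necessarily in the envelope, has finitely many components after shrinking. The concern is that components could accumulate at the boundary near the ``missing'' part $\widehat G^n_\eps(\vect)\setminus G^n_\eps(\vect)$. I would handle this by first noting that $D$ lies in $A$, which is analytic in all of $\widehat G^n_\eps(\vect)$. On a relatively compact shrinking, $A$ has finitely many components, and each component of $D$ lies in $A$ and is a component of the analytic set $A\cap\{\text{all minors}=0\}$ restricted to $G^n_\eps(\vect)$. Choosing $q_j$ in the regular part of each such component keeps the dimension count clean.
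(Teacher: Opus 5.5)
Your strategy matches the paper's: after shrinking, use Theorem~\ref{L2-section} to add $r$ sections generating the fibre at one point of each irreducible component of the bad locus. The paper picks these points on the components of the indeterminacy set $A$, which is analytic on all of $\widehat G^n_\eps(\vect)$, so finiteness after shrinking is automatic; it then stops after one round. Your induction on $\dim D$ is a real improvement. One round only makes $\Phi$ surjective near finitely many points, and in general leaves a nonempty bad set of smaller dimension.

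The gap is the finiteness of the components of $D$. You rightly call this the crux, but you do not establish it.
\begin{itemize}
\item Your first justification is false. A shrunk $G^n_\eps(\vect)$ is not relatively compact in the original one: its closure contains the points $(a,0,\dots,0,0)$ with $\rho\le |a|<1-\delta$, which lie in $Y\setminus G^n_\eps(\vect)$. For instance, the discrete set $\{(a,0,\dots,0,1/m)\}_{m}$ is an analytic subset of $G^n_\eps(\vect)$ with infinitely many components accumulating there.
\item Your fallback is circular. The minors exist only over $G^n_\eps(\vect)$, since they are computed in frames of $E$, so ``$A\cap\{\text{all minors}=0\}$ restricted to $G$'' is just $D$ again. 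The inclusion $D\subset A$ alone cannot rule out the example above, which lies on the line $\{(a,0,\dots,0,t)\}$ and that line might be a component of $A$.
\end{itemize}
Since every component costs $r$ new sections, this is exactly where $N$ could become infinite.

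What is missing is the reverse inclusion $A\cap G^n_\eps(\vect)\subset D$. Near a point where $\Phi$ is surjective, $\Phi$ has constant rank $r$, so $\ker\Phi$ is a holomorphic subbundle and the extended $K$ has no indeterminacy there. Together with Lemma~\ref{surj-map2-bis} this gives $D=A\cap G^n_\eps(\vect)$.

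Now let $A_1,\dots,A_m$ be the components of $A$ in the shrunk envelope.
\begin{itemize}
\item If $A_k\not\subset Y$, then $A_k\cap G^n_\eps(\vect)\supset A_k\setminus Y$, whose regular part is connected and dense. Hence $A_k\cap G^n_\eps(\vect)$ is irreducible.
\item If $A_k\subset Y$, then $A_k\cap G^n_\eps(\vect)$ is the trace of $A_k$ on the polydisc $Y\cap G^n_\eps(\vect)$. This trace has finitely many components, because $A_k$ is analytic near the compact closure and a relatively compact semianalytic set has finitely many connected components.
\end{itemize}
So $D$ has finitely many components. Each enlarged $\Phi'$ is again surjective off a set of codimension $\ge 2$, so Step~2 applies to it and gives $D'=A'\cap G^n_\eps(\vect)$. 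The same argument therefore works at every round of your induction.

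Separately, your claim that $z_1-z_1^0$ does not vanish on $Y\setminus G^n_\eps(\vect)$ fails for $n\ge 3$, e.g.\ at $(z_1^0,z_2,0,\dots,0)$ with $|z_2|=\rho$. This is copied from the paper's proof of Lemma~\ref{surj-map1-bis}, on which the paper's proof of the present lemma relies in exactly the same way.
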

\proof Being an analytic set in $\widehat{G}^n_\eps(\vect)$ our $A$ has, after
a shinking, only finitely many irreducible components. For every
such component which intersects $G^n_\eps(\vect)$, take a point $p$ on 
this intersection and repeat the construction from Lemma \ref{surj-map1-bis} 
for this point to make $\Phi_p$ surjective at $p$. 

\smallskip\qed

\smallskip\noindent{\slsf Step 3. Extension of $E$ to $\widehat{G}^n_\eps(\vect)$}. By Lemma \ref{surj-map3-bis} morphism of bundles $\Phi : G^n_\eps(\vect) 
\times \cc^{Nr}\to E|_{G^n_\eps(\vect)}$ (after shrinking) is surjective and 
therefore induces an isomorphism of bundles  $\widetilde{\Phi } :
F\deff G^n_\eps(\vect) 
\times \cc^{Nr}/\ker \Phi \to E|_{G^n_\eps(\vect)}$. Denote the sheaf of sections 
of the bundle $F=G^n_\eps(\vect) \times \cc^{Nr}/\ker \Phi $ by $\calf$.

\begin{lem}
\label{surj-lem4-bis}
There exists a torsion free coherent analytic sheaf $\ti\calf$ on 
$\widehat{G}^n_\eps(\vect)$ such that $\calf$ is canonically isomorphic 
to the restriction  of $\ti\calf$ to $G^n_\eps(\vect)$.
\end{lem}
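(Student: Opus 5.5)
The natural route is to realize $\calf$ as a quotient of the trivial sheaf $\calo^{Nr}$ by a subsheaf that extends to $\widehat{G}^n_\eps(\vect)$. The extension of the subsheaf will come from the meromorphic extension of the kernel map $K$ constructed in Step 2.

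By Lemma \ref{surj-map3-bis}, $\Phi$ is surjective on $G^n_\eps(\vect)$. So $\ker\Phi$ is a holomorphic subbundle of rank $(N-1)r$ of the trivial bundle, and $K:G^n_\eps(\vect)\to Gr((N-1)r,Nr)$ is holomorphic there. By Step 2, $K$ extends to a meromorphic map on $\widehat{G}^n_\eps(\vect)$ whose indeterminacy set $A$ has codimension $\ge 2$. On $\widehat{G}^n_\eps(\vect)\setminus A$ the pull-back by $K$ of the tautological subbundle of $Gr((N-1)r,Nr)$ is a holomorphic subbundle $S\subset \widehat{G}^n_\eps(\vect)\times\cc^{Nr}$. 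On $G^n_\eps(\vect)\setminus A$ it coincides with $\ker\Phi$.

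I then define $\ti\calf$ as the quotient $\calo^{Nr}/\ti\cals$. Here $\ti\cals$ is the sheaf of germs of holomorphic sections $s$ of $\calo^{Nr}$ on $\widehat{G}^n_\eps(\vect)$ such that $s(z)\in S_z$ for $z\notin A$. Equivalently, $\ti\cals=j_*\cals\cap\calo^{Nr}$, where $j$ is the inclusion of the complement of $A$.

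\textbf{Coherence of $\ti\cals$.} This is the main obstacle. I would argue with the graph $\Gamma$ of $K$, which is an irreducible analytic subset of $\widehat{G}^n_\eps(\vect)\times Gr$ with proper projection $\pi$. Over $\Gamma$ the tautological bundle $\tau$ gives the subsheaf $\calo^{Nr}_\Gamma\cap\tau$. Its direct image intersected with $\calo^{Nr}$ is exactly $\ti\cals$, because $\pi$ is biholomorphic off $A$ and a section lying in $S$ off $A$ lies in $\tau$ on all of $\Gamma$ by continuity. Grauert's direct image theorem then gives coherence. Alternatively, one can use Siu's theorem: $j_*$ of a coherent sheaf across a codimension $\ge2$ set is coherent when the result is reflexive-type. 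Also, $\ti\cals$ is the kernel of the composite $\calo^{Nr}\to j_*(\calo^{Nr}/S)$, and $\calo^{Nr}/S$ is locally free off $A$, so the standard Siu/Trautmann extension applies.

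\textbf{Torsion freeness.} Since $\ti\cals$ is saturated by construction (if $fs\in\ti\cals$ with $f\not\equiv0$, then $s(z)\in S_z$ off $A\cup\{f=0\}$, hence off $A$ by continuity), the quotient $\ti\calf$ is torsion free.

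\textbf{Restriction to $G^n_\eps(\vect)$.} On $G^n_\eps(\vect)$ the sheaf $\ti\cals$ agrees with $\ker\Phi$: both are saturated subsheaves that agree off $A$, and $\ker\Phi$ is a subbundle there. Hence $\ti\calf|_{G^n_\eps(\vect)}=\calo^{Nr}/\ker\Phi=\calf$ canonically.
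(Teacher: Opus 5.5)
Your proof is correct, but it constructs a different sheaf from the one in the paper.

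\textbf{The paper's route.} The paper resolves the indeterminacy of $\ti K$ by a proper modification $\hat\pi:\hat V\to\widehat{G}^n_\eps(\vect)$. On $\hat V$ it takes the locally free quotient $\hat\calf$ of $\hat V\times\cc^{Nr}$ by the subbundle with fibres $\hat K(v)$. It then sets $\ti\calf=\hat\pi_*\hat\calf$, which is coherent by Grauert's theorem.

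\textbf{Your route.} You extend the \emph{kernel} instead. You take the saturation $\wt{\mathcal S}$ of the kernel subbundle across $A$. You prove its coherence by pushing the tautological subbundle forward from the graph $\Gamma$, which may be singular and even non-normal. This avoids any resolution of singularities.

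\textbf{How the two sheaves compare.} Your $\calo^{Nr}/\wt{\mathcal S}$ is exactly the image of $\calo^{Nr}$ in the paper's $\hat\pi_*\hat\calf$, and it can be strictly smaller. For example, take $Nr=2$, $r=1$ and $K(z,w)=[z^2:w^2]$ near $0\in\cc^2$. Then your sheaf is the ideal $(z^2,w^2)$, while the paper's is $(z,w)^2$. Both restrict to $\calf$ on $G^n_\eps(\vect)$, and both have the same double dual, since they agree off $A$, which has codimension $\ge 2$. So either one proves the lemma and serves for the final passage to $\ti\calf^{**}$.

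\textbf{What your version buys.} First, $\calo^{Nr}\to\ti\calf$ is surjective by construction. This is exactly what Step~4 uses when it lifts a section of $\ti\calf$ over the Stein set $\widehat{G}^n_\eps(\vect)$ to a section $\bfsigma$ of $\calo^{Nr}$. For the pushforward this surjectivity can fail: in the example above, $zw$ is not in the image. Second, your saturation argument proves torsion-freeness on all of $\widehat{G}^n_\eps(\vect)$. The paper's one-line reason, that the restriction to $G^n_\eps(\vect)$ is a bundle, does not by itself control $\widehat{G}^n_\eps(\vect)\setminus G^n_\eps(\vect)$, which has interior in $Y$. The correct reason there is that the direct image of a locally free sheaf under a proper bimeromorphic map is torsion free.

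\textbf{What the paper's version buys.} It is shorter, since $\hat\pi$ is already available from Proposition~\ref{gr-mer-ext}.
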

\proof Let $\hat\pi:\hat V\to \widehat{G}^n_\eps(\vect)$ be a proper modification 
such that $\hat K \deff \ti K\circ \hat\pi : \hat V \to Gr ((N-1)r,Nr)$ 
is holomorphic, here $\ti K$ stands for the meromorphic extension to 
$\widehat{G}^n_\eps(\vect)$ of the map $K$ defined in \eqqref(ker-map-bis). Consider 
the factor bundle $\hat V\times \cc^{Nr}/\im \hat K$ on $\hat V$
and denote by $\hat \calf$ its sheaf of sections. Since the modification $\hat\pi$
can be chosen such that it blows-up the indeterminacy points of $\ti K$
and these points are contained in $Y\setminus G^n_\eps(\vect)$ we see that 
$\hat\calf|_{\hat\pi^{-1}(G^n_\eps(\vect))}$ and $\calf|_{G^n_\eps(\vect)}$ are
canonically isomorphic. By Grauert's direct image theorem the sheaf 
$\ti\calf\deff \hat\pi_*\hat\calf$ is coherent and by what was just said above 
its restriction to $G^n_\eps(\vect)$ is canonically isomorphic to $\calf|_{G^n_\eps(\vect)}$.
$\ti\calf$ is torsion free since its restriction to $G^n_\eps(\vect)$ is a
bundle. Lemma is proved.

\smallskip\qed

\smallskip
As the result we got a coherent analytic sheaf $\ti\calf$ on $\widehat{G}^n_\eps(\vect)$
such that $\ti\calf|_{G^n_\eps(\vect)}=\calf$ is locally free, \ie is a bundle,
and is isomorphic to our bundle $E$. $\widetilde{\Phi}: F \to E$ is the
isomorphism constructed above.

\smallskip\noindent{\slsf Step 4. Integrability condition.} 
Let us see that holomorphic sections of $\calf$ over $G^n_\eps(\vect)$
are mapped under the constructed isomorphism $\widetilde{\Phi}$
to holomorphic $L^2_\loc$-sections (!) of $E$ over $G^n_\eps(\vect)$.
Here by locally $L^2$-section of $E$ we mean a holomorphic section
$\bfs$ of $E$ over $G^n_\eps(\vect)$ such that for any relatively compact 
$V\comp \widehat{G}^n_\eps(\vect) $ one has that $\bfs|_{V\cap G^n_\eps(\vect) }
\in L^2(V\cap G^n_\eps(\vect) )$. 

\smallskip
Indeed, if $\bfe\in \calo (G^n_\eps(\vect) )^r$ then $\Phi (\bfe) \in 
L^2_\loc(G^n_\eps(\vect) , E)$, since it is a linear combination of $L^2_\loc$-sections
with holomorphic coefficients. Now if $\bfs$ is 
an image of $\bfe\in \calo (F,G^n_\eps(\vect))$ then $\bfe$ extends to a 
holomorphic section $\ti\bfe$ of $\ti\calf$ over $\widehat{G}^n_\eps(\vect)$
because $\ti\calf$ is torsion free and therefore normal. Since $\widehat{G}^n_\eps(\vect)$ is Stein and all sheaves
in question are coherent we can find a holomorphic section $\bfsigma$
of $\calo(\widehat{G}^n_\eps(\vect) )$ such that $\widetilde{\Phi}(\ti\bfe) = 
\Phi (\bfsigma)$.

\smallskip Finally, replace $\ti\calf$ by its second dual $\ti\calf^{**}$
and get the reflexive extension of  $\cale|_{G^n_\eps(\vect)}$.

\begin{rema} \rm
\label{dim-2-bis}
Notice that we have proved our theorem in dimension two. Since our
extended sheaf is reflexive in dimension two this is a sheaf of 
holomorphic sections of a bundle. Therefore
in dimension two $E$ extends to $\Delta^2$ as a bundle.
\end{rema}

\smallskip From now on we consider the case $n\ge 3$. 

\smallskip\noindent{\slsf Step 5. Proof of the Theorem, local case.} For every 
unit vector $\vect\in \cc^{n-1}$, let $\ti\calf (\vect)$ be the 
reflexive coherent analytic sheaf extending $\cale|_{G^n_\eps (\vect)}$
to $\widehat{G}^n_\eps (\vect)$.




\begin{lem}
\label{globa-on-bis}
For two unit vectors $\vect_1$, $\vect_2$ as above the identity 
morphism of 
$\cale|_{G^n_\eps (\vect_1)\cap G^n_\eps (\vect_2)}$
onto itself extends to the sheaf isomorphism between restrictions 
$\ti\calf (\vect_1)|_{G^n_\eps (\vect_1)\cap G^n_\eps (\vect_2)}$
and $\ti\calf (\vect_1)|_{G^n_\eps (\vect_1)\cap G^n_\eps (\vect_2)}$.
\end{lem}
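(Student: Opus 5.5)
The plan is to compare the two extensions as reflexive sheaves on the domain $\Omega\deff\widehat G^n_\eps(\vect_1)\cap\widehat G^n_\eps(\vect_2)$, which is where they are both defined. The statement's $\ti\calf(\vect_1)$ in the second place should be read as $\ti\calf(\vect_2)$. Throughout, $G_j\deff G^n_\eps(\vect_j)$ and $\widehat G_j\deff\widehat G^n_\eps(\vect_j)$. The first observation is that $\Omega\setminus(G_1\cap G_2)$ has codimension $\ge 2$ in $\Omega$. Indeed, both $G_j$ contain $\Delta^n\setminus Y$, so the complement lies in $Y=\{z_n=0\}$. On $Y$, the set $\widehat G_j\cap Y\setminus G_j$ is the part of $\Delta_n\times\Delta^{n-2}_\eps\times\{0\}$ (in the $\vect_j$-coordinates) lying outside the small polydisc $\Delta^{n-1}_\rho$. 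This is a hypersurface-minus-open-set inside $Y$, not a thin set. So the direct codimension-two argument fails in general, and I would instead argue by uniqueness of torsion-free extensions across $Y$ inside Stein domains.

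\textbf{Step 1 (agreement on the common part of $G_1\cap G_2$).} On $G_1\cap G_2$ both $\ti\calf(\vect_1)$ and $\ti\calf(\vect_2)$ restrict canonically to $\cale$ via the isomorphisms $\widetilde\Phi$ of Step~3. Composing one isomorphism with the inverse of the other gives a canonical isomorphism $\ti\calf(\vect_1)|_{G_1\cap G_2}\cong\ti\calf(\vect_2)|_{G_1\cap G_2}$ that is the identity on $\cale$. This step is tautological.

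\textbf{Step 2 (extension over $\Omega$ via Hartogs for reflexive sheaves).} I would use the standard fact that a reflexive coherent sheaf $\calg$ on a normal space $\Omega$ satisfies $\calg(U)=\calg(U\setminus S)$ for any analytic $S$ of codimension $\ge 2$, together with Thullen extension for holomorphic functions across $Y$ in the present domains, which the paper already uses. Concretely, I would show that $\mathcal{H}om(\ti\calf(\vect_1),\ti\calf(\vect_2))$ is coherent and reflexive on $\Omega$, and that the section $\iota$ from Step 1 extends from $G_1\cap G_2$ to $\Omega$. The natural route is to embed $\ti\calf(\vect_j)^{*}\hookrightarrow\calo^{M}$ locally, or to use a presentation $\calo^{a}\to\calo^{b}\to\mathcal{H}om\to0$ on the Stein domain $\Omega$. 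A section of $\mathcal{H}om$ over $G_1\cap G_2$ is then represented by a vector of holomorphic functions modulo relations. The components extend by the Thullen theorem for holomorphic functions, since $G_1\cap G_2$ contains $\Omega\setminus Y$ and meets $Y\cap\Omega$ in a nonempty open set near the origin. The relations persist by the identity theorem. Applying the same argument to $\iota^{-1}$ and using the identity theorem for the compositions gives that the extension is an isomorphism.

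\textbf{Main obstacle.} The hard part is justifying the extension of sections of a coherent sheaf on the Thullen-type domain $G_1\cap G_2\subset\Omega$ to all of $\Omega$. In particular one must check that $\Omega$ is Stein and that $Y\cap\Omega$ is connected, so that it is a single branch met by $G_1\cap G_2$; both should follow from $\Omega$ being an intersection of (shrunken) polydiscs in different coordinates. A further difficulty is avoiding torsion issues: I would work with the reflexive hulls and, where necessary, discard $Y$-torsion. As a fallback, if extending $\mathcal{H}om$ sections proves awkward, I would instead use the $L^2$-characterization from Step~4. Sections of each $\ti\calf(\vect_j)$ correspond exactly to holomorphic sections of $E$ that are $L^2_\loc$ near $\widehat G_j$. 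This intrinsic description of the sections over both sheaves on overlapping opens of $\Omega$ yields the identification directly.
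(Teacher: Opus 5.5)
Your Step~1 is exactly the paper's proof. The lemma concerns only the restrictions to $G^n_\eps(\vect_1)\cap G^n_\eps(\vect_2)$, where the required isomorphism is $\phi(\vect_2)\circ\phi(\vect_1)^{-1}$ (the paper's commutative diagram), so your argument already proves the statement at that point.

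Your Step~2, the extension to $\widehat G^n_\eps(\vect_1)\cap\widehat G^n_\eps(\vect_2)$, is not part of this lemma. The paper does it afterwards via Lemma~\ref{ext-refl-morph-bis}, using local freeness off a codimension-two set, Thullen extension for bundle sections, and normality. If you keep your Step~2, note that a section over the non-Stein set $G^n_\eps(\vect_1)\cap G^n_\eps(\vect_2)$ need not lift through a presentation $\calo^b\to\calg$. You would therefore need a \emph{global} embedding $\calg\hookrightarrow\calo^M$ with torsion-free cokernel over the shrunk (convex, hence Stein) domain, not a local one.
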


\proof Since $\ti\calf (\vect_i)$ is an extension of $\cale |_{G^n_{\eps} (\vect_i)}$ over 
$B^n_\eps (\vect_i)$, there exists isomorphisms $\phi(\vect_i) : \cale |_{G^n_{\eps} (\vect_i)} \to \ti \calf |_{G^n_{\eps} (\vect_i)}$ for $i = 1, 2$. The following diagram commutes


\[
  \begin{CD}
    \cale|_{G^n_\eps (\vect_1)\cap G^n_\eps (\vect_2)} @= \ti\cale|_{G^n_\eps (\vect_1)\cap G^n_\eps (\vect_2)}\\
    @V\phi(v_1)V\simeq V @V\simeq V\phi(v_2)V\\
    \ti\calf (\vect_1)|_{G^n_\eps (\vect_1)\cap G^n_\eps (\vect_2)} @>>> \ti\calf (\vect_2)|_{G^n_\eps (\vect_1)\cap G^n_\eps (\vect_2)}
  \end{CD}
\]

\smallskip\qed

\begin{lem}
\label{ext-refl-morph-bis}
Let $X$ be a complex manifold, $G$ be a Thullen domain of $X$ and $\calf, \calg$ be two reflexive coherent analytic sheaves on X.  Then
\begin{equation}
    \Gamma(G, Hom_{\calo}(\calf, \calg)) \simeq \Gamma(X, Hom_{\calo}(\calf, \calg)).
\end{equation}
\end{lem}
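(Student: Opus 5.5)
The plan is to reduce everything to the fact that $\mathcal{H}:=Hom_{\calo}(\calf,\calg)$ is itself a reflexive coherent sheaf on $X$, and then to prove a Thullen-type extension theorem for sections of reflexive sheaves.

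First, restriction $\Gamma(X,\mathcal H)\to\Gamma(G,\mathcal H)$ is injective. A reflexive sheaf is torsion free, so a section vanishing on the dense open set $X\setminus Y\subset G$ vanishes identically. It therefore suffices to show that every section $\sigma\in\Gamma(G,\mathcal H)$ extends to $X$.

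Coherence of $\mathcal H$ is standard. Reflexivity follows from the identity $Hom(\calf,\calg)=Hom(\calf,\calg^{**})$ together with the fact that $Hom(\cdot,\text{reflexive})$ is reflexive; in particular $\mathcal H$ is normal, so its sections extend across analytic sets of codimension $\ge2$. Let $S=\Sing(\mathcal H)$ be the set where $\mathcal H$ is not locally free. For a reflexive sheaf $\codim S\ge3$, and in any case $\codim S\ge2$. Then $\mathcal H$ is a holomorphic vector bundle $H$ on $X\setminus S$.

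The extension is local, since extensions are unique by the injectivity above and therefore glue. Fix a point $x\in Y$; it suffices to extend $\sigma$ near $x$.

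\emph{Case $x\notin S$.} Trivialize $H$ on a neighbourhood $W$ of $x$ biholomorphic to a polydisc in which $Y=\{z_n=0\}$. Then $\sigma$ is a vector of holomorphic functions on $G\cap W$. Because $G$ meets every branch of $Y$, we can propagate along the connected branch of $Y\setminus S$ through $x$. This branch is connected because $Y$ is smooth (a branch of $Y$ is a connected component) and removing the codimension-$\ge1$-in-$Y$ set $S\cap Y$ keeps it connected. Along it we apply the classical Thullen theorem for holomorphic functions \cite{Si2}, chart by chart. Concretely, let $\Omega\subset Y\setminus S$ be the set of points near which $\sigma$ extends. $\Omega$ is open and nonempty, since it contains $G\cap Y$ and that set meets the branch. $\Omega$ is also closed in $Y\setminus S$: if $y$ is in its closure, then in a trivializing polydisc chart around $y$ the extended $\sigma$ is defined on a Thullen figure of the form $\Delta^{n-1}\times\check\Delta\cup B\times\Delta$, and by the Thullen theorem for functions it extends to the whole polydisc.

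\emph{Case $x\in S$.} Once $\sigma$ has been extended to $X\setminus S$ as a section of $\mathcal H$, normality of the reflexive sheaf $\mathcal H$ across $S$, which has codimension $\ge2$, extends $\sigma$ to all of $X$. The extensions in the two cases agree on overlaps by uniqueness.

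The main obstacle is the connectedness and propagation step in the first case. The Thullen theorem for functions is applied in local charts, and we must check that an extension obtained in one chart agrees with the extensions in neighbouring charts. This again follows from the uniqueness of extensions, since all charts share the dense set $X\setminus Y$. We must also make sure the Thullen figures appearing in the closedness argument really are of the model form. We would justify this by choosing, at each boundary point $y$, a coordinate polydisc small enough that a point of $\Omega$ lies in it, and shrinking the polydisc so that a small ball around that point is contained in the extension domain.
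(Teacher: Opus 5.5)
Your proof is correct and follows essentially the same route as the paper. The paper extends the section as a section of the vector bundle $Hom_{\calo}(\calf,\calg)$ off a codimension-$\ge 2$ analytic set using Thullen extension for bundle sections, and then crosses that set using the normality of the reflexive sheaf $Hom_{\calo}(\calf,\calg)$; you additionally spell out the Thullen step (propagation along $Y$ minus the singular locus, using the Hartogs-figure lemma) and the injectivity via torsion-freeness, both of which the paper only asserts.
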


\proof Let $s \in \Gamma(G, Hom_{\calo}(\calf, \calg))$. Since $\calf$ and $\calg$ are reflexive, there exists an analytic subset $A$ of $X$ such that $\calf$ and $\calg$ are locally free over $X \smallsetminus A$. Then there exists a unique holomorphic section $\ti s \in \Gamma(X \smallsetminus A, Hom_{\calo}(\calf, \calg))$ such that $\ti s|_{G\smallsetminus A} = s|_{G\smallsetminus A}$. Indeed, holomorphic sections of a holomorphic vector bundle admit the Thullen type extension (where the bundle is defined). Moreover, $\calf$ and $\calg$ are reflexive, so $Hom_{\calo}(\calf, \calg)$ is reflexive. In particular, $Hom_{\calo}(\calf, \calg)$ is normal, i.e. for every open subset $U \subset X$ and every analytic subset $Z$ such that $codim_XZ \geq 2$, 
\begin{equation}
    \Gamma(X \smallsetminus Z, Hom_{\calo}(\calf, \calg)) \simeq \Gamma(X, Hom_{\calo}(\calf, \calg))
\end{equation}
(see \cite{Kob} for details). Hence, there exists a unique section $\sigma \in \Gamma(X, Hom_{\calo}(\calf, \calg))$ such that 
$\sigma|_{X\setminus A} = \ti s$ and so $\sigma|_{G\smallsetminus A}= s|_{G\smallsetminus A}$. Using the normality of $Hom_{\calo}(\calf, \calg)$ again, we obtain that $\sigma|_{G}= s$.

\smallskip\qed

\begin{prop}
\label{local-ext-bis}
    Every Nakano-semipositive bundle over $G^n_{\eps}$ admits an extension over $\Delta^n$ as a coherent reflexive analytic sheaf.
\end{prop}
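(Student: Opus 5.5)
The plan is to glue the local extensions $\ti\calf(\vect)$ obtained in Steps 1--4 over the family of domains $\widehat{G}^n_\eps(\vect)$, $\vect$ a unit vector of $\cc^{n-1}$. The first point is that these sets cover the shrunk polydisk $\Delta^n$, or at least $\Delta^n$ minus a set of codimension $\ge 2$. Take a point $z=(z',z_n)\in\Delta^n$ with $z'\neq 0$ and put $\vect:=z'/|z'|$. In the coordinates adapted to $\vect$ the point $z$ becomes $(|z'|,0,\dots,0,z_n)$. Hence $z\in\Delta_n\times\Delta^{n-2}_\eps\times\Delta=\widehat{P}^n_\eps(\vect)$, and therefore $z\in\widehat{G}^n_\eps(\vect)$. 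The remaining points $z'=0$ already lie in $G^n_\eps$, where $E$ itself is defined. Since $\Delta^{n-1}$ is the unit polydisk rather than the ball, I will have to check that the shrinking of Step 1 is compatible with this. The safest choice is to shrink so that everything lives in $B^{n-1}(1-\delta)\times\Delta_{1-\delta}$.

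Next I form the open cover
\[
\mathcal U=\{G^n_\eps\}\cup\{\widehat{G}^n_\eps(\vect)\}_{\vect}.
\]
On $G^n_\eps$ the sheaf is $\cale$, and on $\widehat{G}^n_\eps(\vect)$ it is $\ti\calf(\vect)$. On $G^n_\eps(\vect)$ the isomorphism $\phi(\vect)$ identifies $\ti\calf(\vect)$ with $\cale$, and this set is exactly where $\widehat{G}^n_\eps(\vect)$ meets $G^n_\eps$.

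The key step is to produce the transition isomorphisms on the overlaps $W:=\widehat{G}^n_\eps(\vect_1)\cap\widehat{G}^n_\eps(\vect_2)$. Lemma~\ref{globa-on-bis} gives the isomorphism
\[
\phi(\vect_2)\circ\phi(\vect_1)\inv:\ti\calf(\vect_1)\to\ti\calf(\vect_2)
\]
only on $G^n_\eps(\vect_1)\cap G^n_\eps(\vect_2)$. I will extend it to all of $W$ using Lemma~\ref{ext-refl-morph-bis}, applied with $X=W$, with the reflexive sheaves $\ti\calf(\vect_1)|_W$ and $\ti\calf(\vect_2)|_W$, and with $G=W\cap G^n_\eps(\vect_1)\cap G^n_\eps(\vect_2)$. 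For this I must verify that this $G$ is a Thullen-type domain in $W$. It contains $W\setminus Y$. It also meets every branch of $Y\cap W$, because $Y\cap W$ is a convex set, namely an intersection of polydisk-type sets in the hyperplane, and it contains a neighbourhood of $0$ lying in $B^{n-1}(\eps)\times\{0\}$. Doing the same with the roles of $\vect_1,\vect_2$ reversed, the inverse map also extends. By uniqueness of extension the two compositions are the identity, so the extended map is an isomorphism.

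Uniqueness of the extension, which again follows from the normality used in Lemma~\ref{ext-refl-morph-bis}, also gives the cocycle condition on triple overlaps. Indeed, the composed map agrees with the identity on the Thullen subdomain, and hence equals the identity everywhere. Gluing then yields a coherent reflexive sheaf $\ti\cale$ on the union of the cover. If the union misses some codimension-$\ge2$ set, I take the sheaf direct image $i_*$, which is coherent and reflexive by the Serre/Siu extension theorem for reflexive sheaves across codimension-$2$ sets.

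I expect the main obstacle to be the geometric verification that each overlap $W\cap G^n_\eps(\vect_1)\cap G^n_\eps(\vect_2)$ is genuinely a Thullen domain in $W$, so that Lemma~\ref{ext-refl-morph-bis} applies. Equivalently, I need that $Y\cap W$ is connected and contains a point of the $\eps$-neighbourhood. I will try to settle this by computing $Y\cap W$ explicitly as an intersection of convex sets containing $0$. A second delicate point is the covering of $\Delta^n$ after shrinking, since the $\widehat{G}^n_\eps(\vect)$ are not open in a uniform way. This can be handled by taking finitely many $\vect$ by compactness of the unit sphere and adjusting $\delta$.
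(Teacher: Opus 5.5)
Your proposal follows the paper's proof. You glue the sheaves $\ti\calf(\vect)$ over the sets $\widehat{G}^n_\eps(\vect)$, take the transition isomorphisms from Lemma~\ref{globa-on-bis}, and extend them to the overlaps by Lemma~\ref{ext-refl-morph-bis}, with invertibility and the cocycle condition coming from uniqueness of the extension. Your covering computation and your convexity check that every double and triple overlap contains a Thullen-type subdomain supply details the paper leaves implicit.

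There is one slip to fix. $G^n_\eps\cap\widehat{G}^n_\eps(\vect)$ is strictly larger than $G^n_\eps(\vect)$: its trace on $Y$ is all of $B^{n-1}(\eps)\times\{0\}$, not just the rotated polydisk of radius $\eps/\sqrt{n-1}$. So $\phi(\vect)$ is not yet defined on that whole overlap, and you must first extend it there by the same lemma, whose hypotheses hold by the same convexity argument.

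Also, the $i_*$ fallback is never needed, because your own computation shows that the $\widehat{G}^n_\eps(\vect)$ cover all of $\Delta^n$, including the points with $z'=0$. As stated, that fallback would in any case be unjustified, since reflexive sheaves need not extend coherently across sets of codimension exactly $2$.
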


\proof For every unit vector $\vect_1, \vect_2$ of $\cc^{n-1}$, denote by $\psi_{\vect_1, \vect_2}$ the sheaf isomorphism between $\ti\calf (\vect_1)|_{G^n_\eps (\vect_1)\cap G^n_\eps (\vect_2)}$
and $\ti\calf (\vect_1)|_{G^n_\eps (\vect_1)\cap G^n_\eps (\vect_2)}$ constructed in lemma \ref{globa-on-bis}. By construction, these isomorphisms verify the cocycle conditions 
\begin{equation}
    \psi_{\vect_1, \vect_1} = \id, \quad \psi_{\vect_2, \vect_3} \circ \psi_{\vect_1, \vect_2} = \psi_{\vect_1, \vect_3}.
\end{equation}
Now, Lemma \ref{ext-refl-morph-bis} ensures that these morphisms uniquely extend over $B^n_\eps (\vect_1)\cap B^n_\eps (\vect_2)$, and that these extensions still satisfy the cocycle conditions (in particular, the extensions are isomorphisms). Hence, up to isomorphisms, there exists a unique analytic sheaf $\ti \calf$ over $\Delta^n$ such that $\ti \calf |_{G^n_{\eps}(\vect)} \simeq \ti\calf(\vect)$ for every unit vector $\vect$ of $\cc^{n-1}$. Moreover, $\ti \calf$ is coherent and reflexive.

\smallskip\qed

\newprg[EXT.global-bis]{Proof of the Theorem \ref{thul-posit}, global case} Now we prove the Theorem \ref{thul-posit}. Let $X$ be a complex manifold of dimension $n \geq 2$, $Y \subset X$ be a submanifold of codimension $1$, $G \subset X$ an open set that intersects each branch of $Y$ and $(E, h)$ a holomorphic hermitian bundle of rank $r$ over $G$ with Nakano-semipositive curvature. 

First, notice that we can assume that $Y$ is irreducible. Indeed, since $G$ intersects each branch of $Y$, we can extend $E$ across each branch and then "glue" the obtained extensions into a reflexive coherent analytic sheaf over $X$.

Fix $x\in Y$. For every point $y$ in $Y\smallsetminus G$, there exists a regular real analytic path $\gamma : [0, 1] \to Y$ such that $\gamma(0) = x$ and $\gamma(1) = y$. Hence, there exists an embedded analytic disc $A$ in $Y$ which contains $x$ and $y$. By the Royden's lemma, we can embed a $n$-dimensional polydisc $U_y$ centered at $x$ in such a way that $U_y \cap Y \simeq \Delta^{n-1} \times \{0\}$ (we use Royden's lemma two times, the first time we embed a $(n-1)$-polydisc neighborhood $P$ of $A$ in $Y$, and then we use it a second time to find a $n$-dimensional polydisc neighborhood of $P$ in $X$).
We denote $U_y \cap G$ by $G_y$.

By Lemma \ref{local-ext-bis}, $E|_{G_y}$ extends to $U_y$.

\begin{figure}[h]
\centering
\scalebox{.7}{\includegraphics[width=5.5in]{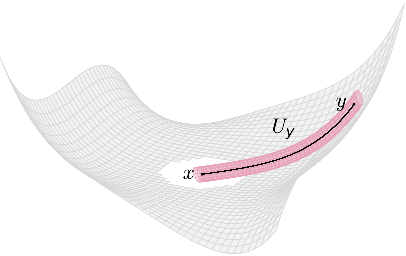}}
\caption{}
\label{path}
\end{figure}

\smallskip\qed


\begin{prop}
\label{global-ext-bis}
    Every Nakano-semipositive bundle over $G$ admits an extension over $X$ as a coherent reflexive analytic sheaf.
\end{prop}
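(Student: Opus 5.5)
The plan is a gluing argument. Reflexive sheaves are determined by their values off codimension two, and morphisms between them extend uniquely across Thullen domains (Lemma \ref{ext-refl-morph-bis}), so the local extensions built in the previous paragraph can be patched together.

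As explained above, we may assume $Y$ irreducible; extensions across different branches are glued the same way as below, on overlaps away from the other branches. The covering is as follows. Points of $X\setminus Y$ and of $Y\cap G$ need nothing: there $\wt\cale:=\cale$. For each $y\in Y\setminus G$ the previous paragraph gives a polydisc chart $U_y\ni y$ with $U_y\cap Y\simeq\Delta^{n-1}\times\{0\}$ and $U_y\cap G\supset$ a Thullen figure $G^n_\eps$. Proposition \ref{local-ext-bis} then produces a reflexive coherent sheaf $\wt\calf_y$ on $U_y$ together with an isomorphism $\phi_y:\cale|_{G_y}\to\wt\calf_y|_{G_y}$. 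Together with $G$ itself, the sets $U_y$ form an open cover of $X$.

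The key step is compatibility on overlaps. On $U_y\cap U_{y'}$ both $\wt\calf_y$ and $\wt\calf_{y'}$ are reflexive, and on $G\cap U_y\cap U_{y'}$ the map $\phi_{y'}\circ\phi_y^{-1}$ is an isomorphism between them. I would apply Lemma \ref{ext-refl-morph-bis} with ambient manifold $U_y\cap U_{y'}$ and Thullen domain $G\cap U_y\cap U_{y'}$. This extends $\phi_{y'}\circ\phi_y^{-1}$ uniquely to a morphism $\psi_{yy'}$, and it extends the inverse in the same way. By uniqueness the two extensions compose to the identity, so $\psi_{yy'}$ is an isomorphism. Uniqueness also gives the cocycle condition $\psi_{y'y''}\circ\psi_{yy'}=\psi_{yy''}$, because it holds on the Thullen part. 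The isomorphisms $\phi_y$ themselves serve as gluing maps with $\cale$ on $G\cap U_y$. Gluing then yields a sheaf $\wt\cale$ on $X$ that is locally isomorphic to coherent reflexive sheaves, hence is coherent and reflexive, and $\phi:\cale\to\wt\cale|_G$ is an isomorphism. The $L^2$ statement follows from Step~4 in each chart, since local $L^2$-boundedness on compacts is a local property and compacts are covered by finitely many $U_y$.

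The main obstacle is verifying that $G\cap U_y\cap U_{y'}$ really is a Thullen domain in $U_y\cap U_{y'}$. That is, in every connected component of $U_y\cap U_{y'}$, each branch of $Y\cap U_y\cap U_{y'}$ must meet $G$. This can fail when the overlap cuts off a piece of $Y$ lying entirely in $Y\setminus G$.

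My fix is to choose all charts $U_y$ centered along discs emanating from the same base point $x\in Y\cap G$, as in the construction above. Then all of them contain a common neighbourhood $W$ of $x$ with $W\subset G$. Instead of gluing pairwise directly, I would glue along $W$: first identify $\wt\calf_y$ and $\wt\calf_{y'}$ on $W$ via $\cale$. Then on the connected component of $U_y\cap U_{y'}$ that contains $W$, the intersection with $Y$ is connected and meets $G$, so the lemma applies.

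For the other components I would shrink the charts. Since $Y\setminus G$ is closed, for each point of $Y\setminus G$ one can pick the polydisc so small around $y$ that the overlaps of charts near $y$ are convex in the chart coordinates. Then $Y$ cuts each overlap in a connected hyperplane piece, and that piece meets $G$ whenever it meets the Thullen part. If it does not meet $G$, then $Y\setminus G$ contains an open piece of $Y$, and there the identification is already fixed by analytic continuation from $W$ through normality of reflexive sheaves.

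Making this connectivity bookkeeping rigorous, by a chain-of-charts argument along paths in $Y$ in the spirit of analytic continuation and using simple connectivity of the discs, is where I expect most of the work to lie.
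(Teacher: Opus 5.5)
Your route is the paper's. There too, the transition isomorphisms induced by $\cale$ on $G_{y_1}\cap G_{y_2}$ are extended to $U_{y_1}\cap U_{y_2}$ by Lemma~\ref{ext-refl-morph-bis}, and the cocycle identities are transported by uniqueness. The paper never checks that $G\cap U_{y_1}\cap U_{y_2}$ is a Thullen domain in $U_{y_1}\cap U_{y_2}$, so you have found the real weak point. Your repair, however, does not close it.

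\textbf{Why the repair fails.} The charts cannot be shrunk around $y$. Proposition~\ref{local-ext-bis} only produces $\ti\calf(y)$ on a chart whose trace on $Y$ meets $G$, which is why $U_y$ must reach back to $x$. On small convex charts near $y$, whose trace lies in $Y\setminus G$, Lemma~\ref{ext-refl-morph-bis} gives nothing. Now take a component $\Omega$ of $U_y\cap U_{y'}$ with $\Omega\cap Y\subset Y\setminus G$. There is nothing to continue from $W$, because $W$ lies in a different component. Normality only extends sections across codimension $\ge 2$. So $\ti\calf(y)|_\Omega$ and $\ti\calf(y')|_\Omega$ are just two reflexive extensions of the same bundle on $\Omega\setminus Y$, and these need not agree ($\calo$ versus $\calo(Y)$). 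Two smaller points:
\begin{itemize}
\item Your claim that the component containing $W$ meets $Y$ in a connected set is unproved.
\item The same problem already affects $\phi_y$. Proposition~\ref{local-ext-bis} provides it only on the Thullen figure, not on all of $G_y$ when $G\cap U_y\cap Y$ is disconnected.
\end{itemize}

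\textbf{The obstruction is monodromy, not bookkeeping.} No chain-of-charts argument that uses only local existence can remove it. Here is an example.
\begin{itemize}
\item Let $X=\cc^*\times\Delta\ni(w,z)$, $Y=\{z=0\}$, $U$ a small disc around $w=1$, and $G=(X\setminus Y)\cup(U\times\Delta)$.
\item Let $L$ be glued from trivial bundles over $\{w\notin\mathbb{R}_{\le 0}\}\times\Delta^*$ and $\{w\notin\mathbb{R}_{\ge 0}\}\times\Delta^*$. The transition is $1$ over $\mathrm{Im}\,w>0$ and $z$ over $\mathrm{Im}\,w<0$. Extend $L$ across $U\times\{0\}$ by the first trivialization.
\item Over every chart $S\times\Delta$ with $U\subset S\subset\cc^*$ and $S$ simply connected (a chart of the paper's type), $L$ extends as a line bundle by continuing the frame from $U$.
\item The extensions obtained along the upper and along the lower half of the unit circle differ near $w=-1$ by $\calo(\pm Y)$.
\item $L$ has no reflexive extension to $X$ at all. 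Indeed, $c_1(L)$ restricts to a generator of $H^2(\cc^*\times\Delta^*,\mathbb{Z})\cong\mathbb{Z}$, while $H^2(X,\mathbb{Z})=0$ and reflexive sheaves on surfaces are locally free.
\end{itemize}
Your gluing uses nothing about the $\ti\calf(y)$ beyond their existence and Lemma~\ref{ext-refl-morph-bis}. So it would equally ``prove'' that $L$ extends.

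\textbf{What is missing.} Continuation along paths inside simply connected discs gives at best independence for homotopic paths. For loops in $Y$, and for paths joining different components of $Y\cap G$, you need an additional ingredient that uses the curvature hypothesis. One option is an intrinsic local description of the extension, for instance through the $L^2$-growth of sections of $E$ near $Y$. Such a description would force extensions built in different charts to coincide on every component of every overlap. This ingredient is missing from your proposal, and the paper's text does not supply it either.
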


\proof For every two points $y_1, y_2$ of $Y$, denote by $\psi_{y_1, y_2}$ the sheaf isomorphism between $\ti\calf (y_1)|_{G_{y_1}\cap G_{y_2}}$
and $\ti\calf (y_2)|_{G_{y_1}\cap G_{y_2}}$ constructed in Lemma \ref{globa-on-bis}. By construction, these isomorphisms verify the cocycle conditions 
\begin{equation}
    \psi_{y_1, y_1} = \id, \quad \psi_{y_2, y_3} \circ \psi_{y_1, y_2} = \psi_{y_1, y_3}.
\end{equation}
Now, Lemma \ref{ext-refl-morph-bis} ensures that these morphisms uniquely extend over $U_{y_1}\cap U_{y_2}$, and that these extensions still satisfy the cocycle conditions (in particular, the extensions are isomorphisms). Hence, up to isomorphisms, there exists a unique analytic sheaf $\ti \calf$ over $X$ such that $\ti \calf |_{U_y} \simeq \ti\calf(y)$ for every point $y$ of $Y$. Moreover, $\ti \calf$ is coherent and reflexive.

\smallskip\qed

\ifx\undefined\bysame
\newcommand{\bysame}{\leavevmode\hbox to3em{\hrulefill}\,}
\fi

\def\entry#1#2#3#4\par{\bibitem[#1]{#1}
{\textsc{#2 }}{\sl{#3} }#4\par

\end{document}